\newcommand*{\ceilfrac}[2]{\mathopen{}\left\lceil\frac{#1}{#2}\right\rceil\mathclose{}}
\newcommand*{\floorfrac}[2]{\mathopen{}\left\lfloor\frac{#1}{#2}\right\rfloor\mathclose{}}
\newcommand*{\abs}[1]{\lvert #1\rvert}
\newtheorem{defi}{Definition}
\newtheorem{conj}[defi]{Conjecture}
\newtheorem{thr}[defi]{Theorem}
\newtheorem{prop}[defi]{Proposition}
\newtheorem{claim}[defi]{Claim}
\newcommand*{\myproofname}{Proof}
\newenvironment{claimproof}[1][\myproofname]{\begin{proof}[#1]}{\end{proof}}
\newcommand{\diam}{diam}
\title{Sharp results for the Erd{\H{o}}s, Pach, Pollack and Tuza problem}
\date{}
\author{
Stijn Cambie \thanks{Department of Computer Science, KU Leuven Campus Kulak-Kortrijk, 8500 Kortrijk, Belgium. Supported by FWO grants with grant numbers 1225224N and 1222524N. E-mail: \protect\href{mailto:stijn.cambie@hotmail.com}{\protect\nolinkurl{stijn.cambie@hotmail.com}} and \protect\href{jorik.jooken@kuleuven.be}{\protect\nolinkurl{jorik.jooken@kuleuven.be}}}
\and
Jorik Jooken \footnotemark[1] 
}
\begin{document}
\maketitle
\begin{abstract}
    We consider the Erd{\H{o}}s, Pach, Pollack and Tuza problem, asking for the maximum diameter of a graph with given order $n$, minimum degree $\delta$ and clique number at most $\omega$. 
    We solve their problem asymptotically for the first hard case, $\omega \leq 3$, for the smallest values of $\delta$ by determining the smallest rational number $f(\delta)$ such that $\diam(G) \leq f(\delta)n+O(1)$ for all graphs $G$ with order $n$, minimum degree $\delta$ and clique number $\omega \leq 3$. We also consider the weaker version where the clique number $\omega \leq 3$ is replaced by having chromatic number $\chi \leq 3$ and solve this version for small $\delta$, thereby yielding a counterexample to a conjecture of Erd{\H{o}}s et al. in a regime where this conjecture was still open. When restricting the conjecture to graphs with chromatic number $\chi \leq 3$, we show that this counterexample appears for the smallest possible $\delta$, namely $\delta=16.$
\end{abstract}

\section{Introduction}
Solving a question by Gallai, in~\cite{EPPT89} Erd{\H{o}}s, Pach, Pollack and Tuza determined the maximum diameter of a graph given its minimum degree and order asymptotically. Furthermore, they also did this for the class of triangle-free and $C_4$-free graphs. 
We summarise~\cite[Thm.~1\&2]{EPPT89}

\begin{thr}\label{thr:EPPT_main}(\cite{EPPT89})
    A connected graph $G$ with minimum degree $\delta$ and order $n$ satisfies $\diam(G)\le 3 \frac{n}{\delta+1}+O(1).$
    If additionally $\omega(G)\le 2,$ then $\diam(G)\le 2\frac{n}{\delta}+O(1).$
\end{thr}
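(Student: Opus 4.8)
The plan is to argue via a diametral shortest path together with a packing argument on the neighborhoods of its vertices. Fix vertices $u,v$ with $d(u,v)=\diam(G)=:D$ and a shortest $u$--$v$ path $v_0=u,v_1,\dots,v_D=v$. The elementary fact driving everything is that distance from $u$ is almost constant on a closed neighborhood along such a path: if $w$ equals or is adjacent to $v_i$, then $\abs{d(u,w)-i}\le 1$. Consequently the closed neighborhood of $v_i$ and the closed neighborhood of $v_j$ are disjoint whenever $\abs{i-j}\ge 3$.

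For the first inequality I would restrict to the indices $i\in\{0,3,6,\dots\}$ lying in $[0,D]$. By the fact above the corresponding closed neighborhoods are pairwise disjoint, and each has size at least $\delta+1$ because $G$ has minimum degree $\delta$. Since there are $\lfloor D/3\rfloor+1$ such indices, counting vertices gives $(\lfloor D/3\rfloor+1)(\delta+1)\le n$, which rearranges to $D\le 3\frac{n}{\delta+1}+O(1)$.

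For the triangle-free case $\omega(G)\le 2$ I would upgrade the count per block from $\delta+1$ to roughly $2\delta$. For an edge $v_iv_{i+1}$ the set $A_i:=N(v_i)\cup N(v_{i+1})$ has size at least $2\delta$, since triangle-freeness forces $N(v_i)\cap N(v_{i+1})=\emptyset$. Every vertex of $A_i$ lies at distance in $\{i-1,i,i+1,i+2\}$ from $u$, so $A_i$ and $A_j$ are disjoint once $\abs{i-j}\ge 4$. Taking $i\in\{0,4,8,\dots\}$ with $i+1\le D$ yields about $D/4$ pairwise disjoint sets, each of size at least $2\delta$, so $\frac{D}{4}\cdot 2\delta\le n+O(1)$ and $D\le 2\frac{n}{\delta}+O(1)$.

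The bookkeeping with floor functions and the additive shifts is routine and is exactly what the $O(1)$ hides. The one place that genuinely uses the hypothesis --- and the main thing to get right --- is the passage from a block of length $3$ contributing $\delta+1$ vertices to a block of length $4$ contributing $2\delta$ vertices: one must verify that combining the neighborhoods of two consecutive path vertices nearly doubles the count rather than being spoiled by overlap, which is precisely the content of triangle-freeness. A minor point to state carefully is the behavior near the endpoints of the path, where the distance coordinate can only reach down to $0$; this never breaks the disjointness claims but should be noted.
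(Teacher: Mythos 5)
Your proof is correct and, for the triangle-free bound, rests on essentially the same idea as the paper's: triangle-freeness forces $N(v_i)$ and $N(v_{i+1})$ to be disjoint, so roughly four consecutive layers from $u$ contain at least $2\delta$ vertices. The only cosmetic difference is that you pack pairwise-disjoint blocks $A_i$ around every fourth path vertex, whereas the paper sums the inequality $\abs{N_i}+\abs{N_{i+1}}+\abs{N_{i+2}}+\abs{N_{i+3}}\ge 2\delta$ over all sliding windows $0\le i\le d-3$ and then divides by four; you also supply the (standard) disjoint-closed-ball argument for the unconditional $3n/(\delta+1)$ bound, which the paper cites from Erd\H{o}s--Pach--Pollack--Tuza rather than reproving.
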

The ratio $\diam(G)/n$ cannot be further improved: sharpness can be derived from blowing up the vertices of a path by complete graphs or complete bipartite graphs.
We present a more elegant proof for the triangle-free case, avoiding case distinctions, for clarity.

\begin{proof}[Proof of~\cref{thr:EPPT_main} for $\omega \leq 2$]
    Let $v_0, v_d \in V(G)$ be such that $d=\diam(G)=d(v_0,v_d)$ and $N_i:=N_i(v_0)=\{x \in V(G) \mid d(v_0,x)=i\}.$
    For every $0 \le i \le d-3,$ $\abs{N_i}+\abs{N_{i+1}}+\abs{N_{i+2}}+\abs{N_{i+3}}\ge 2\delta,$ since the neighbourhoods of the endvertices $u$ and $v$ of an edge $uv \in N_{i+1} \times N_{i+2}$ have to be disjoint.
    Summing all these inequalities leads to $4n > \sum_{i=0}^{d-3} \left(\abs{N_i}+\abs{N_{i+1}}+\abs{N_{i+2}}+\abs{N_{i+3}}\right)\ge 2\delta ( d-2).$
\end{proof}

Since the sharp graphs for~\cref{thr:EPPT_main} have large clique number, they also considered similar statements with restrictions on clique number, i.e., bounding $\omega(G).$ 
They formulated the following conjecture:

\begin{conj}[\cite{EPPT89}]
\label{conj:EPPT_wrong}
Let $r, \delta \geq 2$ be fixed integers and let $G$ be a connected graph of order $n$ and minimum degree $\delta$.

\begin{itemize}
    \item[(i)] If $G$ is $K_{2r}$-free and $\delta$ is a multiple of $(r - 1)(3r + 2)$, then, as $n \to \infty$,
    \[
    \diam(G) \leq \frac{2(r - 1)(3r + 2)}{(2r^2 - 1)} \cdot \frac{n}{\delta} + O(1)
    \]
    \[
    = \left( 3 - \frac{2}{2r - 1} - \frac{1}{(2r - 1)(2r^2 - 1)} \right) \frac{n}{\delta} + O(1).
    \]
    
    \item[(ii)] If $G$ is $K_{2r+1}$-free and $\delta$ is a multiple of $3r - 1$, then, as $n \to \infty$,
    \[
    \text{diam}(G) \leq \frac{3r - 1}{r} \cdot \frac{n}{\delta} + O(1)
    \]
    \[
    = \left( 3 - \frac{2}{2r} \right) \frac{n}{\delta} + O(1).
    \]
\end{itemize}
\end{conj}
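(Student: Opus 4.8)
The plan is to prove both upper bounds of \cref{conj:EPPT_wrong} by a layered counting argument generalising the proof of \cref{thr:EPPT_main} for $\omega\le 2$ given above. Fix a pair $v_0,v_d$ realising $d=\diam(G)=d(v_0,v_d)$ and set $N_i=N_i(v_0)$. The whole difficulty is then concentrated in a \emph{local counting inequality}: one wants an integer window length $L=L(r)$ (expected to be exactly the modulus occurring in the hypothesis, i.e.\ $3r-1$ in (ii) and $(r-1)(3r+2)$ in (i)) and a constant $c=c(r)$ such that, for every interior index $i$,
\[
\abs{N_i}+\abs{N_{i+1}}+\dots+\abs{N_{i+L-1}}\ \ge\ c\,\delta-O_r(1),
\]
or a suitably weighted version of this when the extremal examples force non-uniform coefficients. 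Summing this over all $d-L+1$ shifts of the window, so that each vertex of $G$ is charged at most $L$ times, gives $L\,n\ge c\,\delta\,(d-O_r(1))$, hence $\diam(G)\le \tfrac{L}{c}\cdot\tfrac n\delta+O(1)$; the target is that $L/c$ equals $\tfrac{3r-1}{r}$ in (ii) and $\tfrac{2(r-1)(3r+2)}{2r^2-1}$ in (i). The case $r=1$ of (ii) is precisely the displayed proof, with $L=4$ and $c=2$: one uses an edge $uv$ between two middle layers, the disjointness of $N(u)$ and $N(v)$, and the fact that $N(u)\cup N(v)$ lies in four consecutive layers.

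For $r\ge 2$ the window must be longer and the exact disjointness of two neighbourhoods must be replaced by the near-disjointness that the clique bound provides. The two structural inputs I would use are: (a) for every vertex $v$ the graph induced on $N(v)$ is $K_{\omega-1}$-free (else a $K_\omega$ passes through $v$), so $N(v)$ contains an independent set, and more flexibly a sparse subgraph, of size at least $\abs{N(v)}/(\omega-1)$; and (b) for an edge $uv$ the set $\bigl(N(u)\cap N(v)\bigr)\cup\{u,v\}$ is a clique, so $\abs{N(u)\cap N(v)}\le\omega-2$. The scheme is to locate, inside the interior layers of a window, a short configuration $W$ — a path, a small clique, or a union of a few edges — spanning a controlled number of consecutive layers, and then to estimate $\bigl|\bigcup_{w\in W}N(w)\bigr|$ from below by inclusion–exclusion: the single neighbourhoods contribute $\delta$ each, the pairwise intersections are bounded through (b), and (a) is what guarantees that such a $W$ with the required spread exists. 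Optimising the size and layer-span of $W$ — and hence the window length $L$ — against the overlap budget permitted by $K_{2r+1}$-freeness (resp.\ $K_{2r}$-freeness) is what should produce the precise rational constants, and the congruence conditions on $\delta$ are exactly what make this optimisation attain its integer optimum without a loss.

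To certify that the constants cannot be lowered, I would exhibit the matching extremal graphs: a blow-up of a long path in which each path-vertex is replaced by a complete multipartite (Turán-type) graph, the number of parts chosen from the clique bound, with a repeating period of $L$ consecutive blocks and block sizes tuned so that every vertex has degree exactly $\delta$. Reading off the order, minimum degree, clique number and diameter of this family should reproduce the conjectured ratios and also explain why $\delta$ is required to be a multiple of $L$.

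The hard part will be the accounting of the second paragraph. The naive instance — one edge, two neighbourhoods, four consecutive layers — only yields a bound of the shape $\bigl(3-\Theta(1/r)\bigr)n/\delta$, and driving it down to the precise value claimed in \cref{conj:EPPT_wrong} demands identifying the genuinely optimal configuration $W$ inside a window and controlling every overlap it creates, including overlaps among neighbourhoods that live in a common layer; it is not at all clear a priori that the layered method reaches that exact constant. I would therefore settle the smallest instances first — in particular $\omega\le 3$, i.e.\ the case $r=2$ of (i), at its smallest admissible $\delta$ — both to fix the correct window length and weighting and to check whether the Turán-type family above is in fact optimal there, before attempting the general optimisation.
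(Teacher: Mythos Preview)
The proposal has a fundamental problem: the statement you are trying to prove is a \emph{conjecture}, not a theorem, and the paper does not prove it --- quite the opposite. Part (i) of \cref{conj:EPPT_wrong} is known to be \emph{false}. Czabarka, Singgih and Sz\'ekely \cite{CSS21} gave counterexamples for every $r\ge 2$ and $\delta>2(r-1)(3r+2)(2r-3)$, and one of the main points of the present paper is to exhibit a further counterexample at $r=2$, $\delta=16$: the construction in \cref{sec:chi3} shows $f'(16)\ge \tfrac{31}{216}>\tfrac17$, whereas the conjecture would force $f(16)\le\tfrac17$. So any argument that purports to establish the bound in (i) for all admissible $\delta$ is necessarily wrong.

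Your own write-up essentially anticipates this. You note that the naive window argument only yields a bound of the form $(3-\Theta(1/r))\,n/\delta$, and that ``it is not at all clear a priori that the layered method reaches that exact constant.'' That hesitation is well placed: it \emph{cannot} reach the constant in (i), because the constant is too small. The extremal constructions you sketch --- path blow-ups by Tur\'an-type multipartite graphs with a periodic block pattern --- are exactly the right objects to study, but the optimal period and block sizes are not the ones implicit in the Erd\H{o}s--Pach--Pollack--Tuza formula; the paper's determination of $f(\delta)$ and $f'(\delta)$ for small $\delta$ (and the $\delta=16$ block of period $31$ and order $216$) shows the true optima are subtler. In short, the gap is not a missing lemma inside your outline; it is that you are aiming at a target that the paper disproves.
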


In~\cite{CSS21}, Czabarka, Singgih, and Sz\'ekely gave counterexamples to~\cref{conj:EPPT_wrong} (i) for every $r \geq 2$ and $\delta > 2(r-1)(3r+2)(2r-3)$ (leaving the regime $(r-1)(3r+2) \leq \delta \leq 2(r-1)(3r+2)(2r-3)$ still open). They subsequently stated an updated version of the conjecture, which no longer requires cases.

\begin{conj}[\cite{CSS21}]
    For every $k \ge 3$ and $\delta \ge \ceilfrac{3k}{2}-1$, if G is a connected graph of order $n$, minimum degree at least $\delta$ and $\omega(G)\le k$ (weaker version $\chi(G)\le k)$, then $\diam(G) \le \left(3-\frac 2k\right) \frac n{\delta}+O(1).$
\end{conj}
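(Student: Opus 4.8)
The plan is to lift the layer-counting argument given above for $\omega \le 2$ to general $k$. Fix a diametral pair $v_0,v_d$ with $d=\diam(G)=d(v_0,v_d)$, put $N_i=N_i(v_0)$, and recall that a shortest path between $v_0$ and $v_d$ meets every $N_i$, so for each $i$ there is an edge $uv$ with $u\in N_{i+1}$ and $v\in N_{i+2}$. In the triangle-free case one uses $N(u)\cap N(v)=\emptyset$; in general I would instead keep track of the overlap by inclusion-exclusion,
\[
\abs{N(u)}+\abs{N(v)}-\abs{N(u)\cap N(v)} = \abs{N(u)\cup N(v)} \le \abs{N_i}+\abs{N_{i+1}}+\abs{N_{i+2}}+\abs{N_{i+3}},
\]
since $N(u)\cup N(v)\subseteq N_i\cup N_{i+1}\cup N_{i+2}\cup N_{i+3}$; moreover every common neighbour of $u$ and $v$ lies in $N_{i+1}\cup N_{i+2}$, so $\abs{N_i}+\abs{N_{i+1}}+\abs{N_{i+2}}+\abs{N_{i+3}} \ge 2\delta-\abs{N(u)\cap N(v)}$.

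Everything then hinges on using the clique (or chromatic) hypothesis to keep $\abs{N(u)\cap N(v)}$ well below $\abs{N_{i+1}}+\abs{N_{i+2}}$, which is the naive bound and only reproves $\diam(G)\le 3n/\delta+O(1)$. For the weaker, chromatic version I would fix a proper $k$-colouring $V(G)=C_1\cup\dots\cup C_k$; as $u\sim v$ they receive distinct colours, and since each colour class is independent, $N(u)\cap N(v)$ avoids the colour classes of $u$ and of $v$. If one could ensure that these two colours jointly cover at least a $\tfrac2k$-fraction of $N_{i+1}\cup N_{i+2}$, then $\abs{N(u)\cap N(v)}\le \tfrac{k-2}{k}\bigl(\abs{N_{i+1}}+\abs{N_{i+2}}\bigr)$; substituting this in makes the coefficients of $\abs{N_{i+1}}$ and $\abs{N_{i+2}}$ equal to $\tfrac{2k-2}{k}$, so on summing over $0\le i\le d-3$ each $\abs{N_j}$ is counted at most $\tfrac{6k-4}{k}$ times, giving $\tfrac{6k-4}{k}\,n\ge 2\delta(d-2)$ and hence $\diam(G)\le\bigl(3-\tfrac2k\bigr)\tfrac n\delta+O(1)$. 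For the stronger $\omega(G)\le k$ version one would instead use that $N(u)\cap N(v)$ induces a $K_{k-1}$-free graph and recurse inside it, threading the same bookkeeping.

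The main obstacle is precisely this ``$\tfrac2k$-fraction'' step. A proper $k$-colouring is a global object, so nothing prevents two consecutive layers from being almost entirely coloured with colours other than those of $u$ and $v$, in which case the bound on $\abs{N(u)\cap N(v)}$ decays all the way back to the trivial one and the computation collapses. To rescue it one would want either to average over all edges between $N_{i+1}$ and $N_{i+2}$ --- trading a scarcity of such edges against smaller common neighbourhoods --- or to select the window (and perhaps the colouring) adaptively via a weighting/LP-duality argument over the layers. I expect this local density estimate to carry essentially all the difficulty: making it quantitatively tight is what would be needed to pin down the exact rational $f(\delta)$ advertised in the abstract, while a failure of it in some regime of $(k,\delta)$ is exactly the shape a counterexample to the conjecture would take.
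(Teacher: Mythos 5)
The statement you are attempting to prove is an open conjecture due to Czabarka, Singgih and Sz\'ekely; the paper does not prove it, so there is no ``paper's own proof'' to compare against. What the paper contributes for $k=3$ is the exact determination of $f(\delta)$ and $f'(\delta)$ for small $\delta$ (values that happen to sit below the conjectured $\tfrac{7}{3\delta}$), while the weaker $\chi\le k$ form is cited as proven elsewhere only for $k\in\{3,4\}$; for $\omega\le k$ the conjecture is open already at $k=3$, and for $\chi\le k$ it is open for all $k\ge 5$.

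That said, you have correctly located where your own argument breaks. The estimate $\abs{N(u)\cap N(v)}\le\tfrac{k-2}{k}\bigl(\abs{N_{i+1}}+\abs{N_{i+2}}\bigr)$ is false in general, because nothing forces the two colours of $u$ and $v$ to occupy a $\tfrac2k$-fraction of two consecutive layers, and this is not a patchable technicality. The extremal repeatable constructions --- the fundamental-block matrices exhibited throughout the paper --- routinely have windows in which one colour class is almost absent from $N_{i+1}\cup N_{i+2}$, so the needed local density fails precisely on the graphs the conjecture is about, and a uniform four-layer sliding inequality cannot recover the constant $3-\tfrac2k$ there. The arguments that do work for the proven chromatic cases $k\in\{3,4\}$ instead use inequalities over windows of variable length keyed to how many colours appear in each layer: for instance, if all $k$ colours appear in $N_i$ then pairing off colour classes gives $\abs{N_{i-1}}+\abs{N_i}+\abs{N_{i+1}}\ge\lceil k\delta/(k-1)\rceil$ (this is the $k=3$ inequality recorded in the paper, which notes the analogue for general $k$), and one then bookkeeps deficits and surpluses across a structural classification of which layer-types can follow which. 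That is much closer in spirit to the dynamic programming the paper uses to compute $f$ and $f'$ exactly than to your single four-term sum, and the difficulty of carrying such a classification out for all $k$ is exactly why the conjecture remains open.
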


The weaker version has been proven for $k \in\{3,4\}$ in~\cite{CDS09} (for $k=4$) and~\cite{CSS23}. In this weaker version the colour classes of the $i^{th}$ neighbourhoods give information on the structure of the graphs with most edges (maximising the minimum degree), implying one can conclude more compared to the version where the clique number is bounded. 

Let $G$ be a graph and $N_0 \subset V(G)$ be a subset of vertices. For all $i \geq 1$, define $N_i(N_0) := \{v \in V(G)~|~\min_{u \in N_0}(d(u,v))=i\}$ (we will refer to these vertex sets as \textit{layers} and omit the argument $N_0$ if it is clear from the context or not important how $N_0$ is chosen). If $N_0 = \{u\}$, we simply write $N_i(u)$ instead of $N_i(\{u\})$. Let $d$ be the largest integer such that $N_d$ is not empty. For $X \subset V$, the graph $G[X]=\left(X, E \cap \binom{X}{2}\right)$ is the subgraph of $G=(V,E)$ induced by $X.$ For a graph $G$ with $d \geq 2$, where $G[N_0 \cup N_1] \cong G[N_{d-1} \cup N_d]$ and the isomorphism maps vertices in $N_0$ to $N_{d-1}$ and vertices in $N_1$ to $N_d$, we define the \textit{concatenation} $G'$ as the graph obtained by taking the disjoint union of $G$ and $G-N_0-N_1$, indexing consecutive layers of $G'$ that originate from $G$ as $N'_i=N_i$ ($0 \leq i \leq d$) and those that originate from $G-N_0-N_1$ as $N'_{d-1+i}=N_i$ ($2 \leq i \leq d$), and adding edges between the last layer of $G$ and the first layer of $G-N_0-N_1$ such that $G'[N'_0 \cup N'_1 \cup N'_2] \cong G'[N'_{d-1} \cup N'_{d} \cup N'_{d+1}]$, where for each $k \in \{0,1,2\}$ the isomorphism should map vertices in $N'_k$ to vertices in $N'_{d-1+k}$. An example of a graph $G$ and its concatenation $G'$ is given in~\cref{fig:GAndConcatenation}. For integers $\delta$ and $\omega$ (or $\chi$), we call $G$ induced by consecutive layers $N_0, \ldots, N_d$ \textit{repeatable} (with respect to $\delta,\omega$ or $\delta,\chi$) if $d \geq 2$, $G[N_0 \cup N_1] \cong G[N_{d-1} \cup N_d]$ such that the isomorphism maps vertices in $N_0$ to $N_{d-1}$ and vertices in $N_1$ to $N_d$ and $G$ has clique number at most $\omega$ (or chromatic number at most $\chi$) and every vertex in $G$, except for vertices in its first and last layer, has degree at least $\delta$. Remark that the concatenation $G'$ of a repeatable graph $G$ is again repeatable. We call the integer $d-1$ the \textit{repetition length} of $G$. We call a graph a \textit{fundamental block} if it is obtained by deleting the vertices in the first and the last layer of a repeatable graph. For example, the graph $G$ shown in~\cref{fig:GAndConcatenation} is repeatable with respect to $\delta=3$ and $\omega=3$ (or $\chi=3$), has repetition length $3$ and the graph induced by $N_1 \cup N_2 \cup N_3$ is a fundamental block.

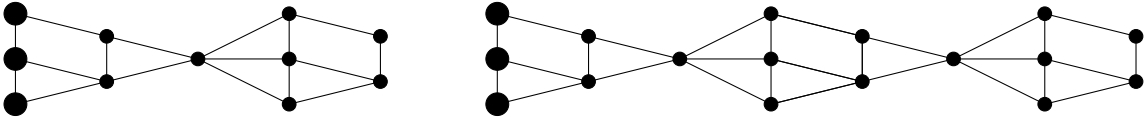
\begin{figure}[h]
    \centering

    \begin{tikzpicture}[scale=0.6]

    \foreach \x in {6}{
        \foreach \y in {-1,0,1}{
        \draw[fill] (\x,\y) circle (0.15);
        }
    }
    \foreach \x in {0}{
        \foreach \y in {-1,0,1}{
        \draw[fill] (\x,\y) circle (0.25);
        }
    }
    \foreach \x in {2,8}{
        \foreach \y in {-0.5,0.5}{
        \draw[fill] (\x,\y) circle (0.15);
        }
    }
    \foreach \x in {4}{
        \foreach \y in {0}{
        \draw[fill] (\x,\y) circle (0.15);
        }
    }
    
    \foreach \x in {0,6}{ 
        \draw (\x,1)--(\x,0);
        \draw (\x,-1)--(\x,0);
    }
    \foreach \x in {2}{ 
        \draw (\x,0.5)--(\x,-0.5);
        
        \draw (\x,0.5)--(\x-2,1);
        \draw (\x,-0.5)--(\x-2,0);
        \draw (\x,-0.5)--(\x-2,-1);

        \draw (\x,-0.5)--(\x+2,0);
        \draw (\x,0.5)--(\x+2,0);
    }
    \foreach \x in {4}{ 
        \draw (\x,0)--(\x+2,-1);
        \draw (\x,0)--(\x+2,0);
        \draw (\x,0)--(\x+2,1);
    }
    \foreach \x in {8}{ 
        \draw (\x,0.5)--(\x,-0.5);
        
        \draw (\x,0.5)--(\x-2,1);
        \draw (\x,-0.5)--(\x-2,0);
        \draw (\x,-0.5)--(\x-2,-1);
    }
    \end{tikzpicture} \quad\quad\quad
        \begin{tikzpicture}[scale=0.6]

    \foreach \x in {6,12}{
        \foreach \y in {-1,0,1}{
        \draw[fill] (\x,\y) circle (0.15);
        }
    }
    \foreach \x in {0}{
        \foreach \y in {-1,0,1}{
        \draw[fill] (\x,\y) circle (0.25);
        }
    }
    \foreach \x in {2,8,14}{
        \foreach \y in {-0.5,0.5}{
        \draw[fill] (\x,\y) circle (0.15);
        }
    }
    \foreach \x in {4,10}{
        \foreach \y in {0}{
        \draw[fill] (\x,\y) circle (0.15);
        }
    }
    
    \foreach \x in {0,6,12}{ 
        \draw (\x,1)--(\x,0);
        \draw (\x,-1)--(\x,0);
    }
    \foreach \x in {2,8}{ 
        \draw (\x,0.5)--(\x,-0.5);
        
        \draw (\x,0.5)--(\x-2,1);
        \draw (\x,-0.5)--(\x-2,0);
        \draw (\x,-0.5)--(\x-2,-1);

        \draw (\x,-0.5)--(\x+2,0);
        \draw (\x,0.5)--(\x+2,0);
    }
    \foreach \x in {4,10}{ 
        \draw (\x,0)--(\x+2,-1);
        \draw (\x,0)--(\x+2,0);
        \draw (\x,0)--(\x+2,1);
    }
    \foreach \x in {8,14}{ 
        \draw (\x,0.5)--(\x,-0.5);
        
        \draw (\x,0.5)--(\x-2,1);
        \draw (\x,-0.5)--(\x-2,0);
        \draw (\x,-0.5)--(\x-2,-1);
    }
    \end{tikzpicture}   
\caption{The graph $G$ (where vertices in $N_0$ are shown larger) and its concatenation $G'$}\label{fig:GAndConcatenation}
\end{figure}

In the current paper, we will focus on graphs with clique number $\omega \leq 3$ or chromatic number $\chi \leq 3$. For each integer $\delta \geq 4$, let $f(\delta)$ be the smallest rational number such that $\diam(G) \leq f(\delta)n+O(1)$ for all graphs $G$ with order $n$, minimum degree $\delta$ and clique number $\omega \leq 3$. Similarly, let $f'(\delta)$ be this number when the restriction $\omega \leq 3$ is replaced by $\chi \leq 3$ (so $f(\delta) \geq f'(\delta)$). 

To determine $f(\delta)$ (or $f'(\delta)$), it suffices to find the best repeatable graph. More precisely, even the best minimal repeatable graph would suffice (as defined in Subsec.~\ref{sec:notation}).

One direction is trivial: if there is a repeatable graph $G$ (with respect to $\delta,\omega$ or $\delta,\chi$) with repetition length $p$ and the graph induced by all layers of $G$ except the first and the last layer has order $n$, then we can concatenate $G$ several times (the resulting graph will have clique number at most $\omega$ or chromatic number at most $\chi$ and all vertices except for vertices in the first and last layer have degree at least $\delta$). Connecting every vertex from the first and last layer with the vertices of a (different) $K_{\delta, \delta}$ results in a graph with minimum degree at least $\delta$ and clique number bounded by three, $\omega \leq 3$ (or chromatic number at most three, $\chi \leq 3$). By repeatedly concatenating, we see that the asymptotic ratio between the diameter and the order goes to $\frac{p}{n}$, illustrating why it is a lower bound for $f(\delta)$ (or $f'(\delta)$).

Conversely, consider for every $d \ge 2$ the graph with diameter $d$ (under the $\delta$ and $\omega$ or $\chi$ condition) with minimum order.
For such a graph, $\abs{N_i} \le 2\delta,$ as otherwise one can replace $N_{i-1}$ and $N_{i+1}$ by an independent set of the same size and replace $N_i$ by a $K_{\delta, \delta}$ whose vertices are all connected to $N_{i-1}\cup N_{i+1}.$
This implies that there is a finite bound $B$ (depending only on $\delta$) for the number of possibilities $G[N_i \cup N_{i+1}]$. By the pigeonhole principle, this means that there exists a constant $C$ (also depending only on $\delta$) such that every graph (under the $\delta$ and $\omega$ or $\chi$ condition) with diameter at least $C$ must contain a subgraph induced by consecutive layers that is repeatable. 
Among all possible minimal repetition lengths, let $f$ be the best ratio of the repetition length to the order. One can remove repeatable graphs (if $G[ \cup_{i=r}^s N_i]$ is repeatable, removing it implies we replace $G$ by the graph $G'$ which is formed by adding edges to $G[\cup_{i=1}^{r} N_i] \cup G[\cup_{i=s}^{d} N_i]$ between $N_r$ and $N_s$ such that $G'[N_r \cup N_s] \cong G[N_r \cup N_{r+1}]$ and the isomorphism maps the vertices of $N_s$ to $N_{r+1}$)
one by one until the resulting graph's diameter is at most $C$.
As the order $n$ goes to infinity, one deduces from this that initially $\diam(G) \le fn+O(1)$ (if the repeatable graph is not minimal, then either it contains a shorter repeatable graph which has a ratio which is not worse, or removing that shorter repeatable graph results in a repeatable graph with a better ratio).

The rest of this paper is structured as follows. In~\cref{sec:omega3}, we consider the case where $\omega(G)\le 3$ and determine $f(4), f(5)$ and $f(6)$ exactly. The repetition length of this optimal repeatable graph becomes long, indicating that solving the question from~\cite{EPPT89} exactly in general is probably very difficult. We observe that the optimal repeatable graphs for $\delta=4,5,6$ have equal chromatic number and clique number, $\chi(G)=\omega(G)=3$, which we expect to hold in the general case. We therefore formulate the following conjecture.

\begin{conj}
    For every $\delta \ge 4$, $f(\delta)=f'(\delta)$.
\end{conj}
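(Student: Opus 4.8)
\emph{A strategy towards the conjecture.} The inequality $f(\delta)\ge f'(\delta)$ is trivial, since $\chi(G)\le 3$ forces $\omega(G)\le 3$, so the real task is to prove $f(\delta)\le f'(\delta)$. By the reduction described in the introduction, it suffices to show that for every minimal repeatable graph $G$ with respect to $\delta$ and $\omega=3$, with repetition length $p$ and fundamental block of order $n$, there is a repeatable graph $\tilde G$ with respect to $\delta$ and $\chi=3$, with repetition length $\tilde p$ and fundamental block of order $\tilde n$, such that $\tilde p/\tilde n\ge p/n$. Informally: large chromatic number should never help to beat the best $3$-chromatic construction. As a sanity check, the cases $\delta\in\{4,5,6\}$ computed in the paper already fit this picture, their optimal $\omega=3$ blocks being $3$-chromatic.

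The first step is to put an optimal repeatable graph $G$ (one attaining the ratio $f(\delta)$ with fundamental block of smallest possible order) into a normal form. Adapting the argument already used in the introduction to the repeatable setting gives $\abs{N_i}\le 2\delta$ for every internal layer. One may further assume $G$ is edge-maximal subject to keeping $\omega\le 3$ and the layering intact: adding edges inside a layer or between two consecutive layers does not change $n$ or $p$, keeps the BFS layering valid, and only helps the degree constraint. This bounds the local complexity: every window $G[N_{i-1}\cup N_i\cup N_{i+1}]$ is then a $K_4$-free, edge-saturated graph on at most $6\delta$ vertices, in which the middle layer carries the degree-$\delta$ requirement.

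The heart of the argument would be a quantitative trade-off statement: measured against the number of vertices they cost, $K_4$-free graphs of chromatic number $4$ deliver strictly less ``degree per vertex along a chain of layers'' than the complete-tripartite pattern realising $\chi=3$, so an order-minimal block never needs them. Concretely, whenever a window fails to be $3$-colourable, one would replace it --- and, because the concatenation is defined via an isomorphism of boundary layers, simultaneously its mirror copy at the repetition boundary --- by a $3$-colourable configuration on no more vertices that still respects the degree lower bound and the layer structure, and verify that the resulting graph is again repeatable with a ratio that is no worse. Iterating this local surgery over all layers, and concatenating at the end, yields the desired $\tilde G$.

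I expect the main obstacle to be precisely this quantitative comparison. There is no local reason forcing $\chi\le 3$ from $\omega\le 3$ --- the Gr\"otzsch graph on $11$ vertices is $K_4$-free with chromatic number $4$ and could a priori be glued between layers --- so the proof has to be global: one must track how inserting such a gadget into some layers changes the number of layers achievable per vertex across the entire fundamental block, not merely within a single window. A secondary, more technical, difficulty is the boundary condition in the definition of repeatable graphs: any surgery must stay compatible with the isomorphism $G[N_0\cup N_1]\cong G[N_{d-1}\cup N_d]$, which is delicate for altered layers near the ends of the block; this can presumably be circumvented by first concatenating $G$ with itself a bounded number of times, at no cost to the ratio, so that every modified layer is safely interior.
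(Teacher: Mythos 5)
This statement is a \emph{conjecture} in the paper, not a theorem: the authors offer no proof, only the computational evidence that the optimal repeatable graphs they found for $\delta\in\{4,5,6\}$ happen to satisfy $\chi=\omega=3$. So there is no ``paper's own proof'' to compare against, and the real question is whether your strategy has a realistic chance of closing the gap.

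Your write-up is honest that it is a strategy, not a proof, and the skeleton you lay out is sensible. The trivial direction $f(\delta)\ge f'(\delta)$ is identified correctly, the reduction to minimal repeatable graphs follows the paper's introduction, and the normal-form step (bounding $\abs{N_i}\le 2\delta$, passing to an edge-maximal $K_4$-free graph with the same layering) is a legitimate and standard cleanup: adding edges inside a layer or between adjacent layers preserves the BFS layering and can only increase degrees, so it costs nothing. You also correctly flag, via the Gr\"otzsch graph, that edge-maximality does not force $3$-colourability, so the cleanup by itself does not touch the core difficulty.

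The genuine gap is exactly the one you name: the ``quantitative trade-off'' lemma that a $4$-chromatic, $K_4$-free window is always dominated (in degree supplied per vertex spent across a chain of layers) by a complete-tripartite replacement. Nothing local forces this. A $4$-chromatic triangle-free gadget can be dense for its order and could in principle supply degree to its neighbours more efficiently than a balanced tripartite layer; ruling this out needs a global count over the whole fundamental block, not a window-by-window substitution. Moreover, your proposed ``local surgery'' must simultaneously (i) not increase the order, (ii) preserve the degree-$\delta$ condition for vertices in the neighbouring layers that lose edges to the replaced window, and (iii) remain compatible with the boundary isomorphism $G[N_0\cup N_1]\cong G[N_{d-1}\cup N_d]$. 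Requirement (ii) is the hard one: a vertex in $N_{i-1}$ may draw most of its degree from the window you are rewriting, and there is no a priori bound on how many extra vertices are needed to restore its degree after a $3$-chromatic replacement. Your suggestion of pre-concatenating to move the surgery into the interior handles (iii) but not (ii). Until a lemma of the form ``for every $K_4$-free window with a given boundary degree profile there is a $3$-colourable window of no greater order realising the same or a stronger profile'' is proved, the argument does not close. The paper's own remark that for small orders extremal graphs with $\chi>\omega$ exist and are non-unique is a warning sign that any such lemma, if true, must rely essentially on the asymptotic/repeatable setting rather than on a purely local exchange.
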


For some small orders, extremal graphs with $\chi(G)>\omega(G)$ exist, and they are not unique.

In~\cref{sec:chi3}, we consider the weaker variant where we concentrate on 3-colourable graphs and determine $f'(7)$ and $f'(8)$ exactly and a lower bound for $f'(16)$ (which is in fact also exact under some additional mild assumptions).

Czabarka, Singgih, and Sz\'ekely showed in~\cite{CSS23} that $\overline{f'(\delta)}=\frac{7}{3\delta}$ is an upper bound for $f'(\delta).$ We compare the exact values that we obtained with this general upper bound in~\cref{tab:delta/n_delta_chi3}.

\renewcommand{\arraystretch}{1.2}
\begin{table}[h]
    \centering
    \begin{tabular}{|c|c|c|c|}
        \hline
       $\delta$  & $f(\delta)$ & $f'(\delta)$ & $\overline{f'(\delta)}$\\ \hline
       4  & $\frac4{7}$ & $\frac{4}{7}$ & $\frac{7}{12}$\\
       5  & $\frac{5}{11}$ & $\frac{5}{11}$ & $\frac{7}{15}$\\
       6  & $\frac{14}{37}$ & $\frac{14}{37}$ & $\frac{7}{18}$\\
       7  & - & $\frac{17}{52}$ & $\frac{1}{3}$\\
       8  & - & $\frac{2}{7}$ & $\frac{7}{24}$\\
       16 & -  & $\textcolor{red}{ \frac{31}{216} }$ & $\frac{7}{48}$\\
       \hline
    \end{tabular}
    \caption{A comparison between $f(\delta)$, $f'(\delta)$ and $\overline{f'(\delta)}$ for small values of $\delta$. The value in red is a lower bound for $f'(16)$, which is exact under additional mild assumptions.}
    \label{tab:delta/n_delta_chi3}
\end{table}

When $\omega$ or $\chi$ is at most 3, the bounds in~\cref{conj:EPPT_wrong} (i) are only stated when $8 \mid \delta$ and these were disproved in~\cite{CSS21} when $\delta \ge 24$. For $\delta \in \{8,16\},$ the conjectured bounds from~\cref{conj:EPPT_wrong} (i) yield $f(8) \leq \frac 27$ and $f(16) \leq \frac 17$. As such, the results in~\cref{tab:delta/n_delta_chi3} indicate that (likely) $f(8)=\frac27$ (as conjectured), but $ f(16) \geq f'(16)\geq \frac{31}{216} > \frac{1}{7}$, thereby yielding the first counterexample to~\cref{conj:EPPT_wrong} (i) in this regime. For the sake of clarity, we stress that we can show that $f'(16) = \frac{31}{216}$ under additional mild assumptions, but the inequality $f'(16) \geq \frac{31}{216}$ holds unconditionally. Finally, we remark that the asymptotic maximum diameter for a $k$-colourable graph is known to be of the form $\left( 3 - \Theta\right( \frac 1k \left) \right)\frac{n}{\delta}+O(1) $ by~\cite[Thm.~3]{CSS21} and~\cite[Thm.~5]{CSS21ejc}, but the exact determination of $f'_k(\delta)$ (and $f_k(\delta)$) -- the analogues for $f'(\delta)$ and $f(\delta)$ when $3$ is replaced by $k$ -- remains open.

\subsection{Notation and terminology}
\label{sec:notation}
We use mostly standard terminology, but explain additional notation in this subsection.

Let $G=(V,E)$ be a graph and $X, Y \subset V(G)$.
The graph $G[X]$ is the subgraph induced by the set $X$. This is the graph with vertex set $X$ and edge set $E \cap \binom{X}{2}.$\\
The graph $G[X,Y]$ is the bipartite graph with vertex set $X \cup Y$ and edge set $\{xy \in E(G)\colon x \in X, y \in Y\}.$
Equivalently, $G[X,Y]$ equals $G[X \cup Y]$ with the edges in $\binom{X}{2} \cup \binom{Y}{2}$ deleted.
It is complete iff $E(G[X,Y])=X \times Y.$
Note that $G[X,Y]$ is a (possibly strict) subgraph of $G[X\cup Y]$.

A repeatable graph $G\left[ \cup_{h=i}^j N_h\right]$ is minimal if there is no repeatable $G\left[  \cup_{h=i'}^{j'} N_h\right]$ where $i\le i'<j'\le j$ and $(i',j') \not=(i,j)$. For example, the two graphs in~\cref{fig:GAndConcatenation} are both repeatable, but only the leftmost graph is a minimal repeatable graph. For the (iterative) concatenation of a minimal repeatable graph $G\left[ \cup_{h=i}^j N_h\right]$, we will call the repetition length of $G$ the \textit{period} of the concatenation.\\
A part of a graph $G$ is \textit{periodic} with period $p$ if $G[ \cup_{j=i}^{i+p} N_i]$ only depends on $i \mod p$ for some $i_{\textrm{min}} \le i \le i_{\textrm{max}}-p$ and that part has no smaller $p$ satisfying the property.\footnote{So we use the term period for what others may call primitive period or fundamental period.} 
If $i_{\textrm{max}}-i_{\textrm{min}}>2,$ every $p+2$ consecutive layers form a repeatable graph.
Every $p$ consecutive layers form a fundamental block.
The ratio of diameter over order of the fundamental block contains the fraction (ratio) we seek.
In the paper, we typically consider the fundamental block as part of a larger periodic graph, particularly a repeatable graph, to which it belongs.

When studying $f'(\delta)$, we need to know how many colours and how many vertices of a certain colour class are present in a certain layer.
We denote by $c(i)$ the number of colours present in $G[N_i]$.

We can present (part of) a graph by means of a matrix, where each row represents a colour, and a column a different layer (corresponding with a neighbourhood). The corresponding maximal graphs are called clump graphs in~\cite{CSS21}. Note that here $c(i)$ equals the chromatic number of $G[N_i]$, since $G[N_i]$ is a complete multipartite graph.

A $\chi \times \ell$- matrix $A=(a_{i,j})_{i \in [\chi], j \in [\ell]}$ will represent a $\chi$-colourable graph of diameter $\ell-1$ (if $\ell\ge3$), which can be formed by independent sets $a_{i,j}K_1,$ where additionally the union of $a_{i,j}K_1$ and $a_{i,j'}K_1$ and the union of $a_{i,j}K_1$ and $a_{i+ 1,j'}K_1$ form complete bipartite graphs for every $i$ and $j \not=j'$, and no other additional edges are present.

$$A=\begin{pmatrix} 
 a_{1,1}& a_{1,2}& a_{1,3}& \ldots &a_{1,\ell}\\
 a_{2,1}& a_{2,2}& a_{2,3}&\ldots & a_{2,\ell}\\
\vdots &&&&\vdots\\
 a_{\chi, 1}& a_{\chi, 2}& a_{\chi, 3}&\ldots & a_{\chi,\ell}
\end{pmatrix}$$

\section{Maximum diameter for $K_4$-free graphs}\label{sec:omega3}

In this section, we determine $f(4), f(5)$ and $f(6)$ exactly. This is done in three subsections.

\subsection{Minimum degree $4$}

\begin{prop}
\label{prop:omega3Delta4}
    If $G$ is a $K_4$-free graph of order $n$ and has minimum degree $\delta \ge 4$, then $\diam(G) \le \frac 47 n + O(1).$ Furthermore this is sharp up to the determination of $O(1)$, which will depend on $ n \pmod 4$ for $n$ large.
\end{prop}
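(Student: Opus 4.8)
The plan is to prove the upper bound by the layer-counting method already illustrated for $\omega\le 2$, but pushed to a longer window of consecutive layers to exploit the $K_4$-freeness. For the extremal graph $G$ with $d=\diam(G)=d(v_0,v_d)$ and layers $N_i=N_i(v_0)$, I would look at windows of $7$ consecutive layers $N_i,\dots,N_{i+6}$ and try to show $\sum_{j=0}^{6}\abs{N_{i+j}}\ge 4\delta-O(1)$ for every valid starting index $i$. Summing over $i$ from $0$ to $d-6$ then gives $7n \ge \sum_i \sum_{j=0}^{6}\abs{N_{i+j}} \ge (4\delta-O(1))(d-5)$, hence $d \le \tfrac{7}{4\delta}n+O(1)$; at $\delta=4$ this is exactly $\tfrac{7}{16}n \le \tfrac47 n$... wait, I need $\tfrac{7}{16}$ vs $\tfrac47$, so the window/bound constants must be chosen so that the ratio comes out to $\tfrac47$ at $\delta=4$. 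Concretely I want an inequality of the shape $\sum_{j=0}^{6}\abs{N_{i+j}} \ge 12 + (\text{lower-order})$ when $\delta=4$ (since $7/12$ is the $\overline{f'}$-type bound, but here we need $4/7$, i.e. $d\le\tfrac47 n$ means $\tfrac74 n \ge d$, so over a window of length $L$ the per-window lower bound should be $\tfrac{7}{4}\cdot\tfrac{L}{?}$...). Let me restate cleanly: to get $d\le \tfrac47 n$ it suffices to find a window length $L$ and constant $c$ with $\sum_{j=0}^{L-1}\abs{N_{i+j}}\ge c$ for all $i$ and $\tfrac{L}{c}=\tfrac47$, e.g. $L=7$, $c=?$ with $7/c = 4/7$ is not integral, so instead $L=4$, $c=7$: I would aim to show that any $7$ consecutive layers — no, any $4$ consecutive layers have total size at least $7$ when $\delta=4$ is false in general; the right statement is the averaged/amortized one. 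The cleanest route is: show that in the minimum-order extremal graph, over any $7$ consecutive layers the vertices number at least $16$ when $\delta=4$, equivalently amortize so the average layer size is at least $16/7$.

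The key structural step — and the main obstacle — is proving the local inequality on $7$ consecutive layers using $K_4$-freeness. The idea: for an edge $uv$ with $u\in N_{i+1}$, $v\in N_{i+2}$ (or more generally for the structure around the "middle" of the window), $N(u)$ and $N(v)$ are disjoint and live in $N_i\cup N_{i+1}\cup N_{i+2}\cup N_{i+3}$, giving $\ge 2\delta$; but $K_4$-freeness forbids $G[N(u)\cap N_{i+1}]$ and $G[N(v)\cap N_{i+2}]$ from containing triangles together with $u$ (resp. $v$), and more importantly controls how edges within $N_{i+1}, N_{i+2}$ interact, letting me propagate the disjointness argument one or two layers further to reach a factor-$4\delta$-type bound over $7$ layers rather than a factor-$2\delta$ bound over $4$. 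I expect to need a careful case analysis on the adjacency pattern between consecutive layers (how many "independent-set-like" vs "bipartite-like" transitions occur), exactly the kind of analysis the paper says it wants to avoid — here it seems unavoidable because the optimal repeatable graph, as the paper notes, already has a long period, so the local certificate must be genuinely $7$-layers wide.

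For sharpness, I would exhibit the explicit repeatable graph: take the fundamental block on $7$ layers realizing the ratio $4/7$ (this should be essentially the $K_4$-free blow-up structure hinted at after Theorem~\ref{thr:EPPT_main}, with layers of sizes summing to values forcing $\delta=4$ and $\omega=3$), verify it is repeatable with respect to $\delta=4,\omega=3$ (check $G[N_0\cup N_1]\cong G[N_{d-1}\cup N_d]$, clique number exactly $3$, and internal degrees $\ge 4$), and then invoke the general lower-bound mechanism described in the introduction: concatenate it many times and cap the two ends with copies of $K_{\delta,\delta}$ to fix the minimum degree. The diameter-to-order ratio of the concatenation tends to $4/7$, so $f(4)\ge 4/7$, matching the upper bound. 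The dependence of the $O(1)$ term on $n\bmod 4$ comes from the $K_{\delta,\delta}=K_{4,4}$ end-caps and from how the last partial copy of the fundamental block must be truncated; I would just note that adjusting $n$ by $1,2,3$ forces a bounded change in the achievable diameter, which I would not compute in detail here.
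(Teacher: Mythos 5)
Your proposal has a genuine gap in the upper-bound argument, and the sharpness half is fine in spirit. The gap: you briefly consider exactly the right local inequality — that any $4$ consecutive layers have total size at least $7$ when $\delta=4$ — and then dismiss it as ``false in general,'' pivoting instead to a $7$-layer window with the target $\ge 16$. Both of those moves are wrong. The $4$-layer bound is correct \emph{precisely because of the $K_4$-free hypothesis}, which you seem not to have used: the minimum-degree condition alone gives $\sum_{j=i}^{i+2}\abs{N_j}\ge 5$ and $\sum_{j=i+1}^{i+3}\abs{N_j}\ge 5$, so if the $4$-sum were $\le 6$ then $\abs{N_i}=\abs{N_{i+3}}=1$ and $\abs{N_{i+1}}+\abs{N_{i+2}}=4$; but then each vertex of $N_{i+1}\cup N_{i+2}$ has at most $4$ available neighbours, so $\delta\ge 4$ forces all of them to be pairwise adjacent, i.e.\ $G[N_{i+1}\cup N_{i+2}]\cong K_4$, contradiction. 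That is the whole local lemma, and summing it over $i$ gives $n\ge 7\bfloor{\diam(G)/4}$, hence $\diam(G)\le \frac47 n+O(1)$. You do not need any amortization or any $7$-layer case analysis.

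Your proposed $7$-window lower bound of $16$ is in fact \emph{false}, and it cannot be the right target: if every $7$ consecutive layers had total size $\ge 16$, you would conclude $\diam(G)\le \frac{7}{16}n+O(1)$, which is strictly stronger than $\frac47 n$ and therefore contradicts the sharpness you are about to prove. Concretely, the optimal fundamental block has layer sizes $(1,2,2,2)$ repeating, so seven consecutive layers can sum to as little as $12$. Relatedly, the arithmetic ``average layer size at least $16/7$'' is off; to get $\diam(G)\le\frac47 n$ you want the average layer size to be at least $7/4$, which is exactly what the $4$-window/$7$-vertex bound encodes. The sharpness construction you describe (a $K_4$-free fundamental block of $7$ vertices over $4$ layers, concatenated, with $K_{4,4}$ end-caps) is the intended one and matches the paper.
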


\begin{proof}
    Let $u,v \in V(G)$ be such that $\diam(G)=d(u,v)$ and let $N_i=N_i(u)$ for every $i.$
    \begin{claim}\label{clm:4/7}
        For every $0 \le i\le \diam(G)-3$, it must be that $\sum_{j=i}^{i+3} \abs{N_j} \ge 7.$
    \end{claim}
    \begin{claimproof}
    By the minimum degree condition $\sum_{j=i}^{i+2} \abs{N_j} \ge 5$ and $\sum_{j=i+1}^{i+3} \abs{N_j} \ge 5$.
    Hence if $\sum_{j=i}^{i+3} \abs{N_j} \le 6,$ 
    $\abs{N_i}=\abs{N_{i+3}}=1$ and $\abs{N_{i+1}}+\abs{N_{i+2}}=4.$
    But every vertex in $N_{i+1} \cup N_{i+2}$ can as such have at most $4$ neighbours, and equality implies that $N_{i+1} \cup N_{i+2}$ induces a clique $K_4.$
    So by $\delta \ge 4$ and $G$ being $K_4$-free, we conclude that $\sum_{j=i}^{i+3} \abs{N_j} \le 6$ is not the case.         
    \end{claimproof}
    By~\cref{clm:4/7}, we conclude that $n=\sum_{j=0}^{\diam(G)} \abs{N_j} \ge 7 \floorfrac{\diam(G)}4$ and thus $\diam(G) \le \ceilfrac{4n}{7}+3.$

    For sharpness, it is sufficient to consider a concatenation of a repeatable graph like in~\cref{fig:omega3delta4},
    where at the beginning and end one can append some complete bipartite graphs of correct size to adjust such that the order and minimum degree are correct.
\end{proof}

\begin{figure}[h]
    \centering

    \begin{tikzpicture}[scale=0.55]
    \foreach \x in {0,8}{
        \draw[fill] (\x,0) circle (0.15);
    
    }
    
    \foreach \x in {2,4,6,10}{
        \foreach \y in {1,-1}{
        \draw[fill] (\x,\y) circle (0.15);
    }
    }

    \foreach \x in {2}{
        
        \draw (\x+2,1)--(\x,1)--(\x,-1)--(\x+2,1); 
        \draw (\x+2,-1)--(\x,1)--(\x-2,0)--(\x,-1)--(\x+2,-1); 
    }

    \foreach \x in {6}{
        
        \draw (\x-2,1)--(\x,1)--(\x,-1)--(\x-2,1); 
        \draw (\x-2,-1)--(\x,1)--(\x+2,0)--(\x,-1)--(\x-2,-1); 
    }
    \foreach \x in {10}{
        
        \draw (\x,1)--(\x,-1); 
        \draw (\x,1)--(\x-2,0)--(\x,-1); 
    }
    
    \node at (11,0) {$\cdots$};
    \end{tikzpicture} 
\begin{tikzpicture}[scale=0.55]
    \foreach \x in {0,8,16}{
        \draw[fill] (\x,0) circle (0.15);
    
    }
    
    \foreach \x in {2,4,6,10,12,14}{
        \foreach \y in {1,-1}{
        \draw[fill] (\x,\y) circle (0.15);
    }
    }

    \foreach \x in {2,10}{
        
        \draw (\x+2,1)--(\x,1)--(\x,-1)--(\x+2,1); 
        \draw (\x+2,-1)--(\x,1)--(\x-2,0)--(\x,-1)--(\x+2,-1); 
    }

    \foreach \x in {6,14}{
        
        \draw (\x-2,1)--(\x,1)--(\x,-1)--(\x-2,1); 
        \draw (\x-2,-1)--(\x,1)--(\x+2,0)--(\x,-1)--(\x-2,-1); 
    }

    \end{tikzpicture}    
\caption{Repetitive concatenation of the repeatable graph (on the left)}\label{fig:omega3delta4}
\end{figure}
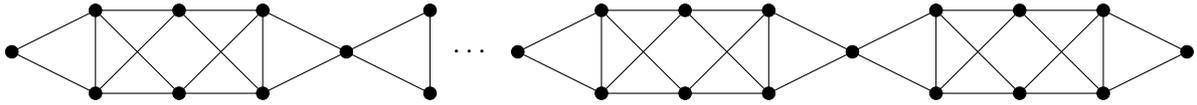
The fundamental block (repeated once in gray) can also be given as a clump graph with corresponding matrix

\begin{center}
    $\begin{pmatrix} 
1&0 & 2 & 0& \textcolor{lightgray}{1}&\textcolor{lightgray}0 & \textcolor{lightgray}2 &\textcolor{lightgray} 0\\
0&1 & 0&1&\textcolor{lightgray}{0}&\textcolor{lightgray}1 & \textcolor{lightgray}0&\textcolor{lightgray}1\\
0&1& 0 &1&\textcolor{lightgray}{0}& \textcolor{lightgray}1 & \textcolor{lightgray} 0&\textcolor{lightgray}1\\
\end{pmatrix}$
\end{center}

With some more work, one can verify the exact bounds, i.e., determine the $O(1)$ as well.
For $n \in \{6,7\}$ the diameter is two (so small orders behave slightly different).

\begin{prop}\label{prop:omega3Delta4exact}
     If $G$ is a $K_4$-free graph of order $n$ and has minimum degree $\delta \ge 4$, then $\diam(G) \le \floorfrac{4(n-4)}7 + 1_{\{n \in \{6,7,12\}\}}.$ 
     Furthermore, this bound is sharp.
\end{prop}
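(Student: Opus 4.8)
The plan is to refine the layer-counting argument from Proposition \ref{prop:omega3Delta4} so that the additive constant is pinned down exactly, separating the analysis into the ``bulk'' (the interior layers, governed by Claim \ref{clm:4/7}) and the ``boundary'' (the first few and last few layers, where the minimum degree condition is vacuous for the endpoints and hence the estimates must be sharpened by hand). First I would set $d=\diam(G)$, pick $u,v$ realizing the diameter, and write $n=\sum_{j=0}^{d}|N_j|$. The key structural improvement over the bare Claim \ref{clm:4/7} is to understand exactly which quadruples of consecutive layers can sum to $7$ rather than $8$ or more: by the claim such a quadruple must have $(|N_i|,|N_{i+1}|,|N_{i+2}|,|N_{i+3}|)$ with the two overlapping triples each summing to at least $5$, forcing a very rigid local pattern (essentially $1,2,2,2$ or $1,2,3,1$-type profiles up to the degree constraints and $K_4$-freeness). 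I would catalogue these rigid profiles and observe that two ``tight'' quadruples overlapping in three layers impose compatible constraints, so that along the path the only way to keep the running average at $7/4$ is to follow the periodic pattern exhibited by the fundamental block $1,0\mid2,0$ (i.e. layer sizes $1,2,2,1$ repeating with period $4$ after accounting for the clump structure), which is exactly the graph in Figure \ref{fig:omega3delta4}.

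Next I would handle the boundary. Since $u=N_0$ and $v=N_d$ need not have degree $\delta$ inside $G$ (the degree condition in the extremal/minimal setup is only guaranteed for interior vertices, and the statement is about arbitrary $K_4$-free graphs of min degree $\delta\ge 4$ — here one uses that for the purpose of the upper bound one may assume $G$ is a minimal-order graph of its diameter, so $|N_i|\le 2\delta$ and the local replacement arguments from the introduction apply), I would argue that $|N_0|=|N_d|=1$ may be assumed, and that the first three layers already contain at least $5$ vertices and the last three at least $5$ vertices, but more precisely that the ``$-4$'' in the bound $\floorfrac{4(n-4)}{7}$ comes from peeling off a constant number of vertices near each end (four vertices total, matching the appended complete bipartite gadgets used for sharpness) before the clean $7/4$ density kicks in. Concretely, I would show $n\ge 7\left\lfloor\frac{d-1}{4}\right\rfloor + (\text{boundary terms})$ with the boundary terms chosen so that inverting gives exactly $d\le\floorfrac{4(n-4)}{7}$, and then separately verify the three sporadic values $n\in\{6,7,12\}$ by direct inspection (for $n=6,7$ the diameter is $2$ as already noted, and $\floorfrac{4\cdot2}{7}+1=1+1=2$ checks out; $n=12$ is a single small case where one extra unit of diameter is achievable, to be confirmed by an explicit graph).

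For sharpness I would exhibit, for each residue $n\bmod 4$, an explicit $K_4$-free graph of minimum degree $4$: take the appropriate number of copies of the fundamental block from the clump matrix above, concatenated as in Figure \ref{fig:omega3delta4}, and cap both ends by attaching each boundary vertex to a fresh copy of $K_{4,4}$ (or a suitably sized complete bipartite graph to absorb the residue of $n$ modulo $4$), checking that every vertex ends with degree $\ge4$, that no $K_4$ is created (the graph is built from complete bipartite pieces and triangles only, and one verifies no two triangles share an edge in a way producing $K_4$), and that the diameter equals the stated value. The main obstacle I anticipate is the bookkeeping in the rigidity step: proving that the only way to sustain the tight $7/4$ ratio over many layers is the specific period-$4$ clump pattern, and in particular ruling out ``phase shifts'' or occasional $8$-quadruples that might still permit $d=\floorfrac{4(n-4)}{7}+1$ for $n\notin\{6,7,12\}$ — this requires carefully tracking how much ``slack'' is lost each time the pattern deviates and showing it is never recovered, which is the kind of delicate discrete accounting that is easy to get off by one. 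The three exceptional $n$ are precisely the boundary cases where this accounting has a genuine exception, so isolating them correctly is essential.
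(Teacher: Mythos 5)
Your high-level strategy — count layers via Claim~\ref{clm:4/7}-type inequalities, treat the first/last few layers separately, isolate exceptional small $n$, and get sharpness by concatenating the fundamental block and capping with bipartite gadgets — matches the paper's. But the execution you sketch diverges from what the paper actually does, and the divergence matters.

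The paper's upper bound needs \emph{no rigidity analysis at all}. It does not try to characterize which quadruples of layers sum exactly to $7$ or argue that only the period-$4$ pattern can sustain the density; that is the piece you flag as "the main obstacle," and it is a red herring for the upper bound (such a characterization is relevant for proving \emph{uniqueness} of extremal graphs, which the statement does not claim). The paper instead works in the reverse direction: fix $d=\diam(G)\ge 3$ and bound the order from below, using only the crude facts that $\abs{N_0}+\abs{N_1}\ge 5$, $\abs{N_{d-1}}+\abs{N_d}\ge 5$, any three consecutive layers have total size $\ge 5$, and any four consecutive layers have total size $\ge 7$. It then does a four-way case split on $d\bmod 4$, tiling the sequence of $d+1$ layers by a prefix pair, (possibly a triple or an extra pair), a run of disjoint quadruples, and a suffix pair, to obtain $n\ge 7k+8, 7k+10, 7k+11, 7k+13$ for $d=4k+2,4k+3,4k+4,4k+5$ respectively ($k\ge 1$), and inverts each case to get $d\le\floorfrac{4(n-4)}{7}$ for $d\ge 6$. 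Your template $n\ge 7\lfloor (d-1)/4\rfloor+\text{(boundary terms)}$ is the right shape, but you leave the boundary terms unspecified, and those terms genuinely depend on $d\bmod 4$; without the explicit four-case bookkeeping the floor function will not come out exactly. Finally, your sharpness sketch is on track in spirit (concatenating the $(1,2,2,2,1)$ gadget and enlarging an end layer, plus checking the sporadic small orders with ad hoc graphs such as $K_6$ and $K_{5,5}$ minus a perfect matching), but note that the paper's exceptional set is $\{6,7,12\}$ while $n=10$ is sharp \emph{without} the $+1$; you would need to check that distinction explicitly rather than lump small cases together.
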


\begin{proof}
    It is easiest to prove this in the reverse direction.
    If $\diam(G)=d(u,v)=d\ge 3$ is of the form $4k+2,4k+3, 4k+4$ or $4k+5$, with $k\ge 1$, then the order of $G$ needs to be at least resp. $7k+8, 7k+10, 7k+11$ or $7k+13.$ This can be shown as follows. Note that both $\abs{N_0}+\abs{N_1}\ge 5$ and $\abs{N_{d-1}}+\abs{N_d} \ge 5.$
    Also any three consecutive neighbourhoods contain at least $5$ elements, and four have at least $7$. 
    By the above,
    \begin{itemize}
        \item if the graph has $4k+3$ layers, its order is at least $5+5+7(k-1)+5=7k+8,$\\
        (here we summed the lower bounds for the order of the first 2, next 3, following quadruples and final two neighbourhoods resp.)
        \item $4k+4$ layers result into an order at least $5+7k+5=7k+10,$
        \item if the graph has $4k+5$ layers, its order is at least $5+1+7k+5=7k+11,$
        \item $4k+6$ layers result into an order at least $5+5+5+7(k-1)+5=7k+13.$
    \end{itemize}
    
    Noting that the graph with diameter $d$ has $d+1$ layers, we conclude for $d \ge 6.$ The small values have been checked separately.

    Sharpness can be derived by inserting the gadget from~\cref{fig:omega3delta4} (repeatable graph minus one end layer which has size of neighbourhoods $(1,2,2,2,1)$) into the corresponding constructions from~\cref{fig:omega3delta4_exact} (at the single vertex of $N_{d-2}$), or enlarging $N_1.$ The small cases where $n \in \{6,7,10,12\}$ are easily verified as well (e.g. $K_6$ and $K_{5,5}$ (both) minus a perfect matching for $n=6$ and $n=10$).
\end{proof}

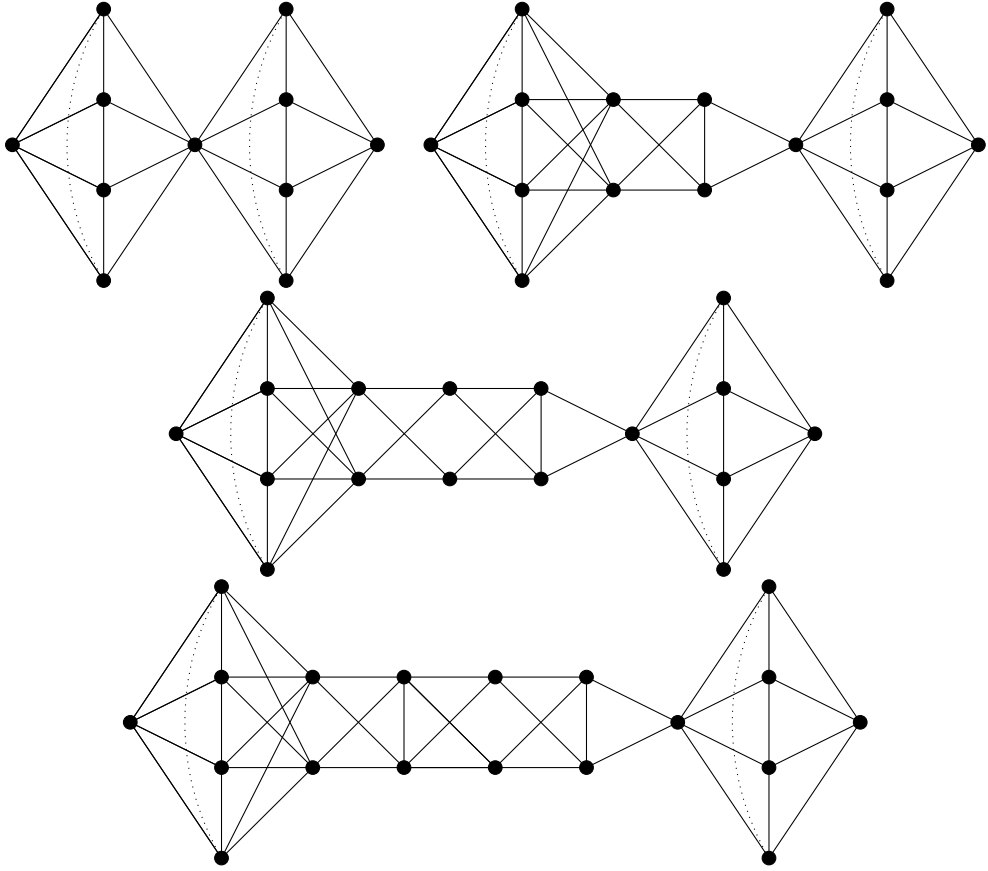
\begin{figure}[h]
    \centering

       \begin{tikzpicture}[scale=0.6]

    \foreach \x in {4,8,12}{
        \draw[fill] (\x,0) circle (0.15);
    
    }

    \foreach \x in {6,10}{
    \draw (\x,-3)--(\x,3);
     \draw[dotted] (\x,-3) arc (210:150:6) ;
        \foreach \y in {1,-1,-3,3}{
        \draw[fill] (\x,\y) circle (0.15);
    }
    }

    \foreach \y in {1,-1,-3,3}{
        \draw (4,0)--(6,\y);
    }

    \foreach \y in {1,-1,-3,3}{
        \draw(12,0)--(10,\y)--(8,0);
    }
    

     \foreach \y in {1,-1,-3,3}{
        \draw(4,0)--(6,\y)--(8,0);
    }


        

    \end{tikzpicture}
    \quad
    \begin{tikzpicture}[scale=0.6]

    \foreach \x in {0,8,12}{
        \draw[fill] (\x,0) circle (0.15);
    
    }

    \foreach \x in {2,10}{
    \draw (\x,-3)--(\x,3);
     \draw[dotted] (\x,-3) arc (210:150:6) ;
        \foreach \y in {1,-1,-3,3}{
        \draw[fill] (\x,\y) circle (0.15);
    }
    }

    \foreach \y in {1,-1,-3,3}{
        \draw (0,0)--(2,\y);
         \draw (0,0)--(2,\y);
    }

    \foreach \y in {1,-1,-3,3}{
        \draw(12,0)--(10,\y)--(8,0);
    }
    
    \foreach \x in {4,6}{
        \foreach \y in {1,-1}{
        \draw[fill] (\x,\y) circle (0.15);
    }
    }

    \foreach \x in {1,-1}{
     \foreach \y in {1,-1,-3,3}{
        \draw(4,\x)--(2,\y);
    }
    }
        

    \foreach \x in {6}{
        
        \draw (\x-2,1)--(\x,1)--(\x,-1)--(\x-2,1); 
        \draw (\x-2,-1)--(\x,1)--(\x+2,0)--(\x,-1)--(\x-2,-1); 
    }

    \end{tikzpicture} 

    \begin{tikzpicture}[scale=0.6]

    \foreach \x in {-2,8,12}{
        \draw[fill] (\x,0) circle (0.15);
    
    }

    \foreach \x in {0,10}{
    \draw (\x,-3)--(\x,3);
     \draw[dotted] (\x,-3) arc (210:150:6) ;
        \foreach \y in {1,-1,-3,3}{
        \draw[fill] (\x,\y) circle (0.15);
    }
    }

    \foreach \y in {1,-1,-3,3}{
        \draw (-2,0)--(0,\y);
         \draw (-2,0)--(0,\y);
    }

    \foreach \y in {1,-1,-3,3}{
        \draw(12,0)--(10,\y)--(8,0);
    }
    
    \foreach \x in {2,4,6}{
        \foreach \y in {1,-1}{
        \draw[fill] (\x,\y) circle (0.15);
    }
    }

    \foreach \x in {1,-1}{
     \foreach \y in {1,-1,-3,3}{
        \draw(2,\x)--(0,\y);
    }
    }
     \foreach \x in {4}{
        
        \draw (\x-2,1)--(\x,1)--(\x-2,-1); 
        \draw (\x-2,1)--(\x,-1)--(\x-2,-1); 
    }

    \foreach \x in {6}{
        
        \draw (\x-2,1)--(\x,1)--(\x,-1)--(\x-2,1); 
        \draw (\x-2,-1)--(\x,1)--(\x+2,0)--(\x,-1)--(\x-2,-1); 
    }

    \end{tikzpicture} 

    \begin{tikzpicture}[scale=0.6]

    \foreach \x in {-4,8,12}{
        \draw[fill] (\x,0) circle (0.15);
    
    }

    \foreach \x in {-2,10}{
    \draw (\x,-3)--(\x,3);
     \draw[dotted] (\x,-3) arc (210:150:6) ;
        \foreach \y in {1,-1,-3,3}{
        \draw[fill] (\x,\y) circle (0.15);
    }
    }

    \foreach \y in {1,-1,-3,3}{
        \draw (-4,0)--(-2,\y)--(0,1);
         \draw (-4,0)--(-2,\y)--(0,-1);
    }

    \foreach \y in {1,-1,-3,3}{
        \draw(12,0)--(10,\y)--(8,0);
    }
    
    \foreach \x in {0,2,4,6}{
        \foreach \y in {1,-1}{
        \draw[fill] (\x,\y) circle (0.15);
    }
    }

    \foreach \x in {2}{
        
        \draw (\x+2,1)--(\x,1)--(\x,-1)--(\x+2,1); 
        \draw (\x+2,-1)--(\x,1)--(\x-2,1)--(\x,-1)--(\x+2,-1); 
        \draw (\x+2,-1)--(\x,1)--(\x-2,-1)--(\x,-1)--(\x+2,-1);
    }

    \foreach \x in {6}{
        
        \draw (\x-2,1)--(\x,1)--(\x,-1)--(\x-2,1); 
        \draw (\x-2,-1)--(\x,1)--(\x+2,0)--(\x,-1)--(\x-2,-1); 
    }

    \end{tikzpicture} 

\caption{$K_4$-free graphs with $\delta=4$ and large diameter for small orders used in~\cref{prop:omega3Delta4exact}}\label{fig:omega3delta4_exact}
\end{figure}

\subsection{Minimum degree $5$}

\begin{prop}
    If $G$ is a $K_4$-free graph of order $n$ and has minimum degree $\delta \ge 5,$ then $\diam(G) \le \frac 5{11} n + O(1).$ 
\end{prop}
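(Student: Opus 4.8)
The plan is to mimic the layer-counting argument from \cref{prop:omega3Delta4}, but now aiming for an average of $11$ vertices per $5$ consecutive layers rather than $7$ per $4$. Concretely, I would let $u,v \in V(G)$ realize the diameter, set $N_i = N_i(u)$, and try to prove a claim of the following shape: for every valid index $i$, the five consecutive layers satisfy $\sum_{j=i}^{i+4} \abs{N_j} \ge 11$. Summing this over a suitable arithmetic progression of starting indices (stepping by $5$) would give $n \ge 11\bfloor{\diam(G)/5} + O(1)$, hence $\diam(G) \le \frac{5}{11}n + O(1)$.

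The heart of the argument is the local claim, and here the $K_4$-free, $\delta \ge 5$ hypotheses must be pushed harder than in the $\delta=4$ case. The easy inputs are: any three consecutive layers contain at least $\delta+1 = 6$ vertices (a vertex in the middle layer has all its $\ge 5$ neighbours there), and any four consecutive layers contain at least — by an argument like \cref{clm:4/7} — enough to rule out the extremal small configuration. I would first establish the ``four layers'' bound $\sum_{j=i}^{i+3}\abs{N_j} \ge 8$ (or whatever the correct $K_4$-free threshold is for $\delta = 5$): if the sum were $7$, then with $\abs{N_i} = \abs{N_{i+3}} = 1$ the middle two layers have $5$ vertices and a vertex there has degree $\le 1 + (\text{rest of } N_{i+1}\cup N_{i+2}) + 1$, forcing $N_{i+1}\cup N_{i+2}$ to contain a $K_4$ when one tries to reach degree $5$; more care is needed because $\abs{N_{i+1}}+\abs{N_{i+2}}=5$ allows a $K_4$ plus a pendant, so I would analyze the degree of the ``extra'' vertex and the clique structure explicitly. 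Then, to get from $8$ per four layers to $11$ per five layers, I would combine overlapping windows: a window of five layers contains two overlapping windows of four (layers $i..i+3$ and $i+1..i+4$), each of size $\ge 8$, which already forces $\abs{N_i} + 2\sum_{j=i+1}^{i+3}\abs{N_j} + \abs{N_{i+4}} \ge 16$; together with the three-layer bound $\sum_{j=i+1}^{i+3}\abs{N_j}\ge 6$ this is not quite enough by itself, so the $K_4$-free condition has to be invoked once more to exclude the boundary cases where $\abs{N_i}$ or $\abs{N_{i+4}}$ is small and the middle is a near-clique.

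The main obstacle I anticipate is exactly this last step: unlike the $\delta=4$ case, the extremal local configurations for $\delta = 5$ are richer (the optimal repeatable graph has period $5$ and $11$ vertices in its fundamental block, according to \cref{tab:delta/n_delta_chi3}), so a single clean claim about four consecutive layers will not suffice and I expect to need a claim genuinely about five consecutive layers, with a short but nontrivial case analysis on the sizes $(\abs{N_i},\ldots,\abs{N_{i+4}})$ that sum to $10$, using $K_4$-freeness and $\delta \ge 5$ to rule each out. The bookkeeping at the two ends of the path (the first and last two layers, where the degree condition is weaker) only affects the $O(1)$ term and can be absorbed exactly as in the proof of \cref{prop:omega3Delta4}. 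Finally, matching sharpness would come from exhibiting the period-$5$ repeatable graph with $11$ interior vertices (presumably displayable as a clump-graph matrix as was done after \cref{fig:omega3delta4}) and concatenating it, appending complete bipartite graphs at the ends to fix the minimum degree; I would only sketch this, since the general concatenation machinery is already set up in the introduction.
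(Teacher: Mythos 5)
The first part of your plan --- the four-layer bound $\sum_{j=i}^{i+3}\abs{N_j}\ge 8$ --- is indeed what the paper proves (\cref{clm:sum8_5ways}), and that bound is tight. But the jump to a pointwise claim $\sum_{j=i}^{i+4}\abs{N_j}\ge 11$ for five consecutive layers is where the plan breaks down: this inequality is simply false, and the case analysis you anticipate (``ruling out'' the five-windows summing to $10$) cannot succeed because those configurations occur in the optimal construction itself. The extremal fundamental block for $\delta=5$ has period $10$ with layer sizes $1,4,2,1,3,3,1,2,4,1$ repeating; the windows $(2,1,3,3,1)$ and $(1,3,3,1,2)$ sum to only $10$, and the window $(1,2,4,1,1)$ that straddles the period boundary (where the size sequence reads $\ldots,2,4,1,1,4,\ldots$) sums to just $9$. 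The ratio $5/11$ is an average over a full period, not a pointwise five-layer minimum, so no sliding-window inequality of the proposed shape can directly yield the proposition.

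The paper's actual proof is structurally different. After establishing the tight four-layer bound and characterizing the exactly five size-patterns $(1,1,4,2),(1,2,4,1),(1,3,3,1),(1,4,2,1),(2,4,1,1)$ that achieve sum $8$ (\cref{clm:sum8_5ways}), together with normalization lemmas like \cref{clm:4spansC4}, it argues \emph{globally} about the minimal optimal period: whenever a sum-$8$ window appears, the overlapping windows are forced to carry compensating surplus, and by tracking where the sum-$8$ patterns can be placed in a repeatable block (two cases: with or without consecutive size-one layers), it shows no period does better than $10/22$, and that all five low-sum patterns must in fact appear, pinning down the extremal block essentially uniquely. The missing ingredient in your proposal is therefore not a sharper local window bound, but a global argument about the arrangement of low-sum windows within a fundamental block. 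Your sharpness sketch (concatenating a period-$10$ block of order $22$) is correct and matches the paper.
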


\begin{proof}
    We start determining an optimal period, and for this we first prove assumptions we may take into account for every neighbourhood $N_i$ within a fundamental block.
\begin{claim}\label{clm:4spansC4}
    If $N_i$ is of size 4, we can assume that
    \begin{itemize}
        \item $G[N_i]$ spans a $C_4$
        \item $N_{i-1}$ and $N_{i+1}$ form an independent set
        \item $G[N_i, N_{i+1}]$ and $G[N_i, N_{i-1}]$ are complete.
    \end{itemize}
\end{claim}

\begin{claimproof}
    If $\abs{N_{i-1}}=\abs{N_{i+1}}=1,$ we would have a $K_4$ in the center, which cannot happen.
    Hence $\abs{N_{i-1}}+\abs{N_{i+1}}\ge 3.$ 

    Now assume we replace $G[N_i]$ by a $C_4$, remove all edges in $G[N_{i-1}]$ and $G[N_{i+1}]$ to obtain independent sets and add an edge between every vertex $x \in N_i$ and every vertex $y \in N_{i-1} \cup N_{i+1}$.
    No $K_4$ has been created in this way.
    We can assume that every vertex in $N_{i+1}$ has at least one neighbour in $N_{i+2}$, since otherwise we just can add one such an edge, without creating a $K_4.$ 
    It is easily verified that every vertex in $N_{i-1}\cup N_i \cup N_{i+1}$ has degree at least $5.$
\end{claimproof}

\begin{claim}\label{clm:sum8_5ways}
    There are only $5$ possibilities where $\abs{N_i}+\abs{N_{i+1}}+\abs{N_{i+2}}+\abs{N_{i+3}}\le 8.$ 
    In those cases, $(\abs{N_i},\abs{N_{i+1}},\abs{N_{i+2}},\abs{N_{i+3}})$ is among $\{(1,1,4,2),(1,2,4,1),(1,3,3,1),(1,4,2,1),(2,4,1,1)\}$.
\end{claim}

\begin{claimproof}
    Note that $\abs{N_i}+\abs{N_{i+1}}+\abs{N_{i+2}}\ge \delta+1=6$, 
    $\abs{N_i}+\abs{N_{i+1}}+\abs{N_{i+2}}+\abs{N_{i+3}}\ge \delta+2=7$ and equality would imply that $\abs{N_i}=\abs{N_{i+3}}=1$ and $N_{i+1}\cup N_{i+2}$ is a clique, which is a contradiction.
    Hence the sum $\abs{N_i}+\abs{N_{i+1}}+\abs{N_{i+2}}+\abs{N_{i+3}}$ is at least $8$ and $\max \{\abs{N_i},\abs{N_{i+3}}\} \le 2.$
    It is easy to rule out $(1,1,5,1)$ or symmetrically $(1,5,1,1),$ as well as $(2,2,2,2)$.
    Also $(1,2,3,2)$, $(1,3,2,2)$ and $(1,4,1,2)$ (and the reflections) are impossible under the condition $\omega<4<\delta.$ By our assumption of~\cref{clm:4spansC4}, it is also clear that each of the $5$ cases with equality in~\cref{clm:sum8_5ways} corresponds with a unique part of a graph.
\end{claimproof}

Since $\frac 94>\frac {11}{5}$ (we later show that $\frac {5}{11} \le f(5)$), there exist $4$ consecutive neighbourhoods with sum of sizes equal to $8.$
In particular, we can consider a minimum optimal period and corresponding fundamental block.
If $\abs{N_i}=\abs{N_j}=1$ (where $i<j$) and
$\abs{N_{i+1}}=\abs{N_{j+1}}$, we know that $G[ \cup_{h=i}^{j+1} N_h]$ contains a fundamental block.

First, assume that no two consecutive size one neighbourhoods are present.
In that case, there are at most $3$ quadruples of consecutive neighbourhoods with sum of sizes $8$ in a fundamental block.
In that case the period is at least $6$ (size $1$ neighbourhoods need to be at distance at least $3$ of each other, and period $3$ gives the worse bound $\frac 73$) and thus at least twice the sum of $4$ consecutive neighbourhoods is $8$ and two such quadruples have to intersect.
The latter implies that without loss of generality we may assume that $\abs{N_{i}}=\abs{N_{i+3}}=\abs{N_{i+6}}=1.$
If $(\abs{N_{i+j}})_{0 \le j \le 6}=(1,2,4,1,4,2,1),$ there are two subsets of length $4$ with sum $11$ and we are done.
In the other case, $(1,3,3,1)$ borders (at least) one of the two other options and so at least one sum is $10.$
Hence the period is bounded by length $10$ (if not, the ratio is worse than $\frac{11}5$).
Hence the fundamental block is either of length $9$ and order at least $3 \cdot 7$,
or length $10$ and order at least $1+2\cdot7+9=24.$ Both result in worse ratios.

So assume $(\abs{N_{i}},\abs{N_{i+1}})=(1,1)$ occurs for some $i$ as part of the string $(4,1,1,4)$, which has sum $10.$
Since the fundamental block will have length at least $6,$ we need to have at least three times a sum of sizes of 4 consecutive neighbourhoods, which is $8.$
This again implies that the fundamental block has to be of length strictly larger than $6$.
Since the sum of $3$ consecutive neighbourhoods is at least $6$, each $4$ will be part of another sequence of $4$ consecutive neighbourhoods with sum at least $10.$
Taking into account~\cref{clm:sum8_5ways}, all $5$ ways with low sum need to be in the fundamental block, from which uniqueness of the optimal fundamental block follows.

The latter also gives sharpness.
It is sufficient to consider a (repeated) concatenation of the gadget (fundamental block) like in~\cref{fig:omega3delta5},
    where at the beginning and end one can append some bipartite graphs of correct size to adjust such that the order and minimum degree are correct.
\end{proof}

\begin{figure}[h]
    \centering

    \begin{tikzpicture}[scale=0.75]

        \foreach \y in {2,0,-2}{
        \draw (8,\y) --(6,0);
        \draw (10,\y) --(12,0);
    }
      \foreach \y in {1,-1}{
        \draw (4,\y) --(6,0);
        \draw (14,\y) --(12,0);
    }

 \foreach \x in {-1,1}{
    \foreach \y in {-1,1,-3,3}{
        
        \draw (4,\x)--(2,\y);
        \draw (14,\x)--(16,\y);
    }
    }

    \draw[dotted] (0,0)--(-1.5,0);
    \draw[dotted] (18,0)--(19.5,0);
    
    \foreach \y in {-1,1,-3,3}{
        
        \draw (0,0)--(2,\y);
        \draw (18,0)--(16,\y);
    }

    \foreach \x in  {2,0,-2}{
        \foreach \y in {2,0,-2}{
        
        \draw (8,\x)--(10,\y);
    }
    }

     \foreach \x in  {8,10}{
    \draw (\x,-2)--(\x,2);
    }

    \foreach \x in  {2,16}{
    \draw (\x,-3)--(\x,3);
    }

     \draw[red,thick] (8,0)--(10,0);

        \foreach \x in {0,6,12,18}{
        \draw[fill] (\x,0) circle (0.15);
        }    
    \foreach \y in {-1,1}{
            \foreach \x in {14,4}{
        \draw[fill] (\x,\y) circle (0.15);
        }
    }
      \foreach \y in {-1,1,-3,3}{
            \foreach \x in {16,2}{
        \draw[fill] (\x,\y) circle (0.15);
        }
    }
    \foreach \x in {8,10}{
        \foreach \y in {2,0,-2}{
        \draw[fill] (\x,\y) circle (0.15);
    }
    }

    \draw[dotted] (2,-3) arc (210:150:6) ;
 \draw[dotted] (16,-3) arc (-30:30:6) ;

    \end{tikzpicture} 

\caption{The optimal graph for $\delta=5$.  The red line draws attention to the missing edge.}\label{fig:omega3delta5}
\end{figure}
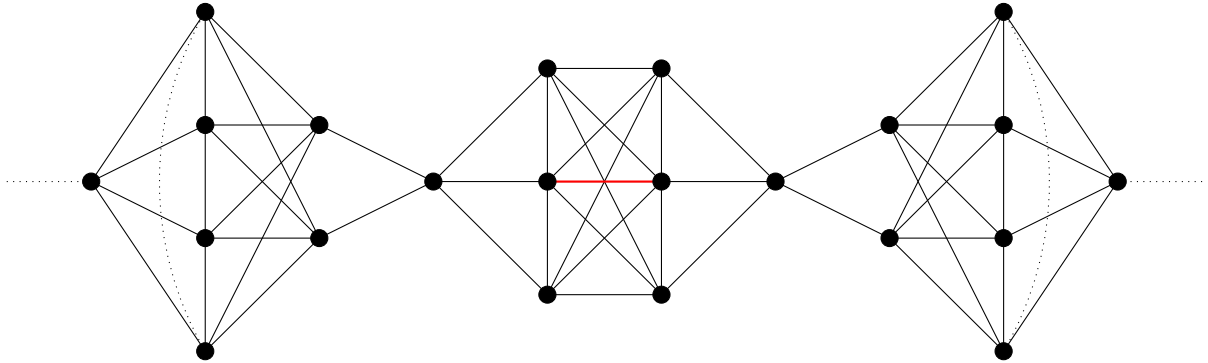

The fundamental block itself is a clump graph that can be presented by the corresponding matrix (from the gray column onwards, the repetition starts)

$$\begin{pmatrix} 
1&0 & 2 & 0&1&1&0&0&2&0& \textcolor{lightgray}{1}\\
0&2 & 0&0&2&0&1&0&2&0&\textcolor{lightgray}{0}\\
0&2& 0 &1&0&2&0&2&0&1&\textcolor{lightgray}{0}\\
\end{pmatrix}$$

\subsection{Minimum degree $6$}

\begin{prop}
    If $G$ is a $K_4$-free graph of order $n$ and has minimum degree $\delta \ge 6,$ then $\diam(G) \le \frac {14}{37} n + O(1).$ 
\end{prop}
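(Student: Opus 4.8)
The plan is to mirror the structure of the $\delta=5$ case: first extract local lower bounds on sums of a few consecutive layer sizes under the $K_4$-free, $\delta\ge 6$ constraint, then argue that the optimal period is attained by a short, essentially unique fundamental block, which we then exhibit as a clump graph to obtain sharpness. Concretely, I would first establish that for any window of consecutive layers $N_i,\dots,N_{i+k}$, the sum $\sum_{j=i}^{i+k}\abs{N_j}$ is bounded below: three consecutive layers have total size at least $\delta+1=7$ (a vertex in the middle layer has all its neighbours among the three layers, so $\abs{N_i}+\abs{N_{i+1}}+\abs{N_{i+2}}\ge \delta+1$), and four consecutive layers have total size at least $\delta+2=8$, with equality forcing $\abs{N_i}=\abs{N_{i+3}}=1$ and $G[N_{i+1}\cup N_{i+2}]$ a clique — impossible since $\delta\ge 6$ would then require a $K_5$ (hence certainly a $K_4$). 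Pushing this further, I would determine exactly which size-tuples $(\abs{N_i},\dots,\abs{N_{i+4}})$ of five consecutive layers can have small sum (around $2\delta-1=11$ or $2\delta=12$), using the $K_4$-freeness to kill tuples with a layer of size $1$ adjacent to a large layer (as in Claim~\ref{clm:4spansC4} and Claim~\ref{clm:sum8_5ways}), and normalising assumptions on layers of a given size (a layer of size $5$ can be taken to induce $C_5$ with complete bipartite joins to its neighbours, etc.).

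The second step is the combinatorial optimisation over periods. As in the $\delta=5$ proof, one notes that since the conjectured ratio $\frac{14}{37}$ is strictly smaller than the "easy" ratios coming from short periods (e.g. period $3$ giving $\frac{3}{7}$, or naive repetitions of a single small window), the optimal fundamental block must be long enough to force several overlapping low-sum windows, and these overlaps constrain the sequence of layer sizes tightly. I would look for the periodic size sequence of period $37/\!\gcd$-normalised length that minimises (period)/(order); the matrix/clump-graph description strongly suggests the optimal fundamental block has a specific length with total order $37$ over a period of $14$ (so a block of $14$ layers summing to $37$, laid out so every length-$4$ window hits the extremal value $8$ only as often as the $K_4$-free constraints permit, and length-$3$ windows hit $7$). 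The argument that no shorter or differently-shaped block does better is the bookkeeping heart of the proof: one enumerates the finitely many candidate low-sum windows, shows any long admissible periodic sequence must contain all (or a forced multiset) of them, and deduces both the value $\frac{14}{37}$ and uniqueness of the extremal block — exactly the pattern used for $\delta=5$, just with a larger, more delicate case analysis.

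Finally, for sharpness I would take the repeated concatenation of the extremal fundamental block (presented as a clump graph by an explicit $3\times 14$-ish matrix, analogous to the displayed matrices for $\delta=4,5$), verify directly that it is $K_4$-free, $3$-chromatic, and has every internal vertex of degree at least $6$, and pad the two ends with complete bipartite graphs of appropriate size so that the first and last layers also meet the degree bound; letting the number of repetitions tend to infinity gives $\diam(G)\ge \frac{14}{37}n - O(1)$, matching the upper bound.

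\textbf{Main obstacle.} The hard part will be the period-optimisation bookkeeping: with $\delta=6$ the number of admissible small-sum windows and the ways they can overlap is substantially larger than for $\delta=5$, so ruling out all competing periodic patterns (and pinning down the unique extremal block of order $37$) requires a careful, essentially exhaustive case analysis rather than a slick argument. A secondary technical point is getting the normalisation claims right — e.g. that a size-$5$ (or size-$4$) layer may be assumed to induce a prescribed graph with complete joins to both neighbours without sacrificing the $K_4$-free or minimum-degree conditions — since these are what collapse each admissible size-tuple to a unique local graph and thus make the clump-graph/matrix description legitimate.
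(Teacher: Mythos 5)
Your structural plan (normalize the local structure of small layers, then optimize over periodic layer-size sequences, then exhibit a clump graph for sharpness) is the right shape, and your proposed normalization claims are close in spirit to the paper's Claims~\ref{clm:5spansC5}--\ref{clm:44} (e.g.\ a size-$5$ layer induces $C_5$ or $K_{2,3}$ with complete joins, bounding $\abs{N_i}\le 5$, normalizing two adjacent size-$4$ layers). But there is a genuine gap at precisely the step you yourself flag as the ``bookkeeping heart'': the period optimization that pins down $\frac{14}{37}$ is never actually carried out. You promise ``a careful, essentially exhaustive case analysis'' in the style of $\delta=5$, but the paper does \emph{not} do this by hand for $\delta=6$ --- it explicitly delegates the optimization to the computer search algorithm of Appendix~\ref{sec:pc_search}, after using the normalization claims to shrink the search space. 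This is not a cosmetic difference: the optimal fundamental block has $14$ layers of total order $37$, and the number of admissible size-tuples and overlap patterns is large enough that there is no reason to believe the $\delta=5$-style hand argument (which already needed several ad hoc eliminations for a much shorter block) scales to this case. Without either executing the case analysis or invoking a verified computation, the claimed bound $\diam(G)\le\frac{14}{37}n+O(1)$ is asserted rather than proved.

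A secondary, smaller issue: your plan to bound sums of five consecutive layers around $2\delta-1=11$ or $2\delta=12$ is not what the constraints actually yield, and is not the route the paper takes; the paper instead caps individual layer sizes ($\abs{N_i}\le 5$) and normalizes specific local configurations, which feeds cleanly into the dynamic program. Either technical route could in principle work, but since you are leaning on the five-window bounds to drive the case analysis, you would need to derive and justify the precise admissible tuple set --- and then still face the combinatorial explosion noted above.
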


\begin{proof}

\begin{claim}\label{clm:5spansC5}
    If $N_i$ is of size 5, we can assume that
    \begin{itemize}
        \item $G[N_i]$ spans a $K_{2,3}$ (or $C_5$)
        \item $N_{i-1}$ and $N_{i+1}$ form an independent set
        \item $G[N_i, N_{i+1}]$ and $G[N_i, N_{i-1}]$ are complete.
    \end{itemize}
\end{claim}

\begin{claimproof}
    If $\abs{N_{i-1}}=\abs{N_{i+1}}=1,$ we would have a $K_5$ in the center, which cannot happen.
    Also in the case $\abs{N_{i-1}}+\abs{N_{i+1}}= 3,$ with some case distinction one concludes that $\delta \ge 6$ implies that there is a $K_4$ (each vertex in $N_i$ has at most one non-neighbour among the other $7$ vertices).
    
    Hence $\abs{N_{i-1}}+\abs{N_{i+1}}\ge 4 .$ 

    Now assume we replace $G[N_i]$ by a $C_5$ or $K_{2,3}$, remove all edges in $G[N_{i-1}]$ and $G[N_{i+1}]$ to obtain independent sets and add an edge between every vertex $x \in N_i$ and every vertex $y \in N_{i-1}$. No $K_4$ has been created in this way. We can assume that every vertex in $N_{i+1}$ has at least one neighbour in $N_{i+2}$, since otherwise we just can add one such an edge, without creating a $K_4.$ 
    It is easily verified that every vertex in $N_{i-1}\cup N_i \cup N_{i+1}$ has degree at least $6.$
\end{claimproof}

\begin{claim}\label{clm:less_than6}
     We can assume that there is no neighbourhood with $\abs{N_i}>5$ in the optimal fundamental block. Consequently, we also assume there is no $i$ with $\abs{N_{i-1}}+\abs{N_{i+1}}\le 3$.
\end{claim}

\begin{claimproof}
    If $\abs{N_{i-1}}=\abs{N_{i+1}}=1$, then $\abs{N_i}\ge 8$ is needed since $G[N_i]$ has to be triangle-free.
    We can replace their sizes by $2,5,2$ (taking into account~\cref{clm:5spansC5}).
    If $\abs{N_{i-1}}+\abs{N_{i+1}}\ge 3$ and $\abs{N_i}\ge 6$, we can replace 
    $G[N_{i\pm 1}]$ by an independent set of size $\max\{ \abs{N_{i\pm 1}},2\}$, replace $G[N_i]$ by $K_{3,2}$, and let $G[N_i, N_{i+1}]$ and $G[N_i, N_{i-1}]$ be complete.
\end{claimproof}

\begin{claim}\label{clm:44}
     If in the optimal fundamental block, $\abs{N_i}=\abs{N_{i+1}}=4$, we can assume that $\{ G[N_i], G[N_{i+1}]\} \in \{ \{S_4,S_4\}, \{4K_1, 4K_1\}, \{4K_1, C_4\} \}. $
\end{claim}

\begin{claimproof}
    If $\abs{N_{i-1}}=\abs{N_{i+2}}=1$, we can let $G[N_i \cup N_{i+1}]$ be a balanced tripartite graph $T(8,3)$ (all edges between two copies of a 4-vertex-star $S_4$ are present, except for the one between the two centers) and let $G[N_{i+2}, N_{i+1}]$ and $G[N_i, N_{i-1}]$ be complete.

    If $\abs{N_{i-1}}=1$ and $\abs{N_{i+2}}\ge 2$ (the reverse is analogous) we can choose $ G[N_i] = C_4, G[N_{i+1}]=4K_1$, where $ G[N_{i}, N_{i+1}]$ and $G[N_i, N_{i-1}]$ are complete, and after possibly removing an edge from $G[N_{i+2}]$ to make $G[N_{i+2}]$ triangle-free if necessary (there are only a few cases to consider by~\cref{clm:5spansC5} and~\cref{clm:less_than6}), $G[N_{i+2}, N_{i+1}]$ is also complete.

    If $\abs{N_{i-1}}, \abs{N_{i+2}}\ge 2,$ we can choose $\{ G[N_i], G[N_{i+1}]\} = \{4K_1, 4K_1\}$, $ G[N_{i}, N_{i+1}]=K_{4,4}$, and both $G[N_{i+2}, N_{i+1}]$ and $G[N_i, N_{i-1}]$ being complete bipartite as well (after possibly removing an edge from $G[N_{i-1}]$ and/or $G[N_{i+2}]$ to make it triangle-free).
\end{claimproof}

Finally, using the algorithm described in~\cref{sec:pc_search}, we can find the optimal period thanks to the aforementioned claims, yielding $f(6)=\frac{14}{37}$.

An example of an optimal fundamental block is presented below

$$\begin{pmatrix} 
0& 1& 0& 3& 0& 1& 0& 3& 0& 2& 0& 2& 0& 3\\
2& 0& 0& 2& 0& 0& 2& 0& 1& 0& 2& 0& 1& 0\\
2& 0& 2& 0& 2& 0& 2& 0& 1& 0& 2& 0& 1& 0
\end{pmatrix}$$

One can note that $N_i \cup N_{i+1}$ spans a complete bi- or tripartite graph, but it is not necessarily a Turán graph since there is e.g. an appearance of $K_{3,1,1}.$
\end{proof}

\section{Maximum diameter for $3$-colourable graphs}\label{sec:chi3}

In this section, we prove that the statement obtained when restricting ~\cref{conj:EPPT_wrong} (i) to graphs with chromatic number at most $\chi$ is correct when $r=2$ if and only if $\delta=8$.

More precisely, we first prove that 

\begin{prop}\label{prop:delta7}
    If $G$ is a $3$-colorable graph of order $n$ with minimum degree $\delta \ge 7,$ then $\diam(G) \le \frac {17}{52} n + O(1).$ 
\end{prop}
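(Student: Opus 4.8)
The plan is to follow the same "layer counting + local structure" scheme that worked for the $K_4$-free cases, but now adapted to $3$-colourable graphs and the target ratio $\frac{17}{52}$. Concretely, I would first reduce to studying an optimal repeatable graph (equivalently, an optimal fundamental block), using the general reduction already sketched in the introduction: $f'(7)$ is realised by some minimal repeatable graph, every layer $N_i$ may be assumed to have size at most $2\delta=14$, and by the clump-graph normalisation we may assume each $G[N_i]$ is a complete multipartite graph (at most $3$ parts) and each $G[N_i,N_{i+1}]$ is as dense as possible without creating a monochromatic class conflict. This turns the problem into a finite optimisation over sequences of column vectors in a $3 \times \ell$ matrix $A=(a_{i,j})$, where the degree condition says that for an internal layer $N_j$ every vertex sees at least $\delta=7$ others, i.e. the relevant partial column sums across $N_{j-1},N_j,N_{j+1}$ (minus the vertex's own class) are at least $7$.

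The core of the argument will be a sequence of claims, in the spirit of \cref{clm:4spansC4,clm:sum8_5ways} and \cref{clm:5spansC5,clm:less_than6,clm:44}, pinning down the local structure of the optimal block. First I would establish the basic inequality: any three consecutive layers have total size at least $\delta+1 = 8$, and then push to show that any block of, say, $13$ consecutive layers has total size at least $52$ — this is what yields $\diam(G) \le \frac{17}{52}n + O(1)$, since summing over disjoint windows of $13$ layers gives $n \ge 52 \lfloor (d+1)/13\rfloor$. To get the sharp $13$-vs-$52$ count rather than a weaker ratio, I expect to need the auxiliary claims that control which small column-sum configurations can appear and how they must be spaced: a "small" layer (size $1$ or $2$) forces its neighbours to be large, a size-$1$ layer forces the two layers on either side to be complete-bipartite-joined and independent (as in \cref{clm:4spansC4}), and two small layers cannot be too close together. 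Because $\delta=7$ is odd and we have $3$ colours available, the extremal local gadgets will be less symmetric than in the $\omega\le 3$ case; the colour classes give extra structural leverage (we know $G[N_i]$ is complete multipartite, not merely triangle-free), which is exactly the point made in the introduction about the weaker chromatic version.

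For the matching lower bound (sharpness), I would exhibit an explicit optimal fundamental block as a clump-graph matrix of width $13$ and total weight $52$ — analogous to the displayed matrices for $\delta\in\{4,5,6\}$ — verify directly that every internal column satisfies the degree-$7$ condition, that the associated graph is $3$-colourable (immediate from the $3$-row matrix construction), and that it is repeatable (first two columns isomorphic to last two). Concatenating it and capping the two ends with copies of $K_{\delta,\delta}$ gives, for infinitely many $n$, a $3$-colourable graph of minimum degree $7$ with $\diam(G) = \frac{17}{52}n - O(1)$, so $f'(7) \ge \frac{17}{52}$; combined with the upper bound this gives equality. The actual search for that width-$13$ block is presumably done by the computer-assisted period search referenced as \cref{sec:pc_search}, so in the writeup I would state the optimal block and its verification rather than the search.

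The main obstacle I anticipate is the upper bound's tightness: proving that $13$ consecutive layers genuinely cannot sum to less than $52$. The naive bound from "every $3$ layers sum to $\ge 8$" only gives roughly $\frac{8}{3}$ per layer, i.e. about $34.\overline{6}$ over $13$ layers, which is far too weak; and "every $4$ layers sum to $\ge 9$" (forbidding a $C_4$-type clique obstruction plus the degree bound) still only gives about $29.25$ per $13$. So the gain to $52/13 \approx 4$ per layer must come from the finer claims about forbidden short patterns and forced spacing of small layers, and from a careful case analysis of how those patterns can tile a period — essentially a discharging or exhaustive-case argument over the finitely many admissible local configurations. Getting that case analysis to close without gaps (and organising it so the $O(1)$ end-effects are genuinely bounded) is where the real work lies; it is plausible that, as with $\delta=6$, part of this is offloaded to the algorithmic search of \cref{sec:pc_search} after the structural claims have cut the search space down to a finite, manageable size.
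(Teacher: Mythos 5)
Your overall scheme --- reduce to a minimal repeatable graph / fundamental block presented as a $3\times\ell$ clump-graph matrix, cut the search space with structural claims, hand the residual finite optimisation to the period-search algorithm of the appendix, and exhibit the optimal block for sharpness --- is indeed the route the paper takes, and you are right that for the chromatic version the colour-class bookkeeping (rather than induced-subgraph structure) is the lever. Two things are off, though.

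The concrete upper-bound mechanism you propose does not work. You suggest showing that "any block of $13$ consecutive layers has total size at least $52$" and summing over disjoint windows; but that inequality would give $d \le \frac{13}{52}n + O(1) = \frac{n}{4}+O(1)$, which is \emph{stronger} than $\frac{17}{52}n$ and hence false, since the extremal block achieves $\frac{17}{52}$. (The sharp fundamental block for $\delta=7$ has $17$ columns summing to $52$, not $13$.) More importantly, even with the corrected count, the paper does not use a per-window inequality for the $\chi\le 3$ cases: such an argument only closes for the easy $\delta=4$, $\omega\le 3$ case, and one cannot hope to bound \emph{every} window of a fixed length in an arbitrary $3$-colourable graph. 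The paper instead relies on the repeatable-graph reduction from the introduction, in which the extremal ratio is a property of the best periodic block; the upper bound follows from that reduction plus a finite search, not from a window sum.

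Second, the structural claims you anticipate --- small layers forcing large neighbours, size-$1$ layers forcing complete-bipartite joins to independent neighbours, spacing of small layers --- are the ones used in the $\omega\le 3$ analysis, but they are not what makes the $\delta=7$, $\chi\le 3$ search tractable. The two claims the paper actually uses are different in flavour: first, if $c(i)=3$ then $\abs{N_{i-1}}+\abs{N_i}+\abs{N_{i+1}} \ge \lceil 3\delta/2\rceil$ (a local-deficit inequality obtained by double-counting across the three pairs of colour classes), and second, in the optimal fundamental block one may assume $\abs{N_i}\le\delta-1$ for every $i$, established by a case analysis on $c(i-2),c(i),c(i+2)$ (detailed in the appendix). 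The second claim is the crucial improvement over the crude $\abs{N_i}\le 2\delta$ bound you keep; without it the space of clump matrices is far too large for the search to terminate. That is a genuine gap in your plan.
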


\begin{prop}\label{prop:delta8}
    If $G$ is a $3$-colorable graph of order $n$ with minimum degree $\delta \ge 8,$ then $\diam(G) \le \frac {2}{7} n + O(1).$ 
\end{prop}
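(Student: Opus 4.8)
The plan is to follow the same template that was used for \cref{prop:delta7}: establish structural normalization claims about individual layers in an optimal fundamental block, bound the maximum layer size, reduce to a finite search over periodic patterns, and then invoke the search algorithm of \cref{sec:pc_search} to pin down the optimal period and hence the ratio $\frac{2}{7}=\frac{8}{28}$. For the upper bound, the key observation is that with $\delta\ge 8$ and $\chi\le 3$, any four consecutive layers $N_i,\dots,N_{i+3}$ must satisfy $\sum_{j=i}^{i+3}\abs{N_j}\ge 28/4\cdot 4 = 28$? — no, more carefully: since we want diameter $\le \frac{2}{7}n+O(1)$, equivalently $n\ge \frac{7}{2}\diam(G)+O(1)$, the right normalization is that the average layer size along the path must be at least $7/2$. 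So I would look for the largest $p$ and a periodic assignment of layer sizes with period $p$ and total size $\frac{7}{2}p$ that is realizable as a $3$-colourable graph with all internal degrees $\ge 8$; showing no better (i.e.\ smaller average) periodic pattern exists is the content of the proof.

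Concretely, I would first record the local degree constraints: $\abs{N_{i-1}}+\abs{N_i}+\abs{N_{i+1}}\ge \delta+1=9$ for every internal layer (each vertex of $N_i$ has all its $\ge 8$ neighbours inside these three layers, plus itself), and the sharper four-layer bound obtained by noticing that adjacent layers cannot both be degenerate. Then I would prove normalization claims analogous to \cref{clm:4spansC4} and \cref{clm:5spansC5}: if a layer $N_i$ is "large" (here the relevant threshold will be around size $5$ or $6$, since a $3$-colourable $G[N_i]$ is complete multipartite with at most $3$ parts and $G[N_i]$ being a single vertex forces the neighbouring layers to be large), we may assume $G[N_i]$ is a balanced complete multipartite (i.e.\ a Turán-type) graph, $N_{i\pm 1}$ are independent sets, and the bipartite graphs $G[N_i,N_{i\pm 1}]$ are complete. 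The colour-class matrix formalism from \cref{sec:notation} is the right language here: each column has at most $3$ nonzero entries, consecutive columns interact through the "complete bipartite between colour $i$ of one column and colours $i,i+1$ of the next" rule, and the degree of a vertex in colour class $a$ of column $j$ is the sum of the other entries of column $j$ in different rows plus the entries of columns $j-1$ and $j+1$ that it sees.

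Having reduced to matrices, I would bound the number of relevant columns: an argument like \cref{clm:less_than6} shows we may assume no layer exceeds some small size (replacing an oversized degenerate configuration $(1,\text{large},1)$ by something like $(2,\text{Turán},2)$ never increases the ratio and never creates a $K_4$ or a $4$-chromatic subgraph), and then the set of admissible column types is finite, as is the set of admissible consecutive pairs (adjacency compatibility plus the degree lower bound on the shared columns). The optimal period is then the shortest cycle in the resulting finite "transfer" digraph minimizing (length)/(total weight), which is exactly what the algorithm in \cref{sec:pc_search} computes; I would exhibit the resulting optimal fundamental block as a matrix (as done for $\delta=6$ and $\delta=7$) and note that $\sum$ of its entries over its $p$ columns equals $\frac{7}{2}p$, giving both the matching lower-bound construction (concatenate the block, pad the two ends with copies of $K_{\delta,\delta}$ as in the earlier propositions) and, via the claims, the upper bound.

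The main obstacle is the normalization step together with controlling the column-size bound: one has to argue carefully that every "good" configuration can be replaced, without loss, by one drawn from the restricted finite palette, because a priori a layer could be as large as $3\delta/2$ or so and the interaction between a large layer and its $K_4$-free / $3$-colourable neighbours has several sub-cases (mirroring the case analysis in \cref{clm:44} for $\delta=6$). The other delicate point is verifying that the lower-bound construction genuinely has no $K_4$ — here the $\chi\le 3$ hypothesis actually helps, since a proper $3$-colouring of the concatenated graph is obtained directly from the row indices of the matrix, so $\omega\le\chi\le 3$ is automatic; one only needs the degree and diameter bookkeeping at the two padded ends. Once the finite search is set up correctly, confirming $f'(8)=\frac{2}{7}$ is a finite computation, and the matching construction makes the bound sharp up to the $O(1)$ term.
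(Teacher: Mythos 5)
Your plan matches the paper's proof in its essentials: establish structural/normalization claims to cap the layer sizes in an optimal fundamental block, reduce to a finite search over clump-graph periodic patterns, and invoke the algorithm of \cref{sec:pc_search} to certify $f'(8)=\tfrac{2}{7}$, with the matching lower bound from concatenating the exhibited fundamental block (and you correctly observe that $\chi\le 3$ is automatic from the row structure). The one spot to firm up is the layer-size cap — the paper proves $\abs{N_i}\le \delta-1=7$ via \cref{clm:localdeficit_withtriangle} and the case analysis of \cref{clm:|Ni|<delta} on $c(i-2),c(i),c(i+2)$, which is more involved than the $(1,\text{large},1)\mapsto(2,\text{Tur\'an},2)$ replacement you sketch and is sharper than your guessed threshold of $5$ or $6$ — but you flag exactly this as the main obstacle, and the overall architecture is the same.
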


We start with an easy observation (proving that having all colours present in a neighbourhood implies a local relative deficit), which is also true when $\omega=3$ and $N_i$ contains a triangle.

\begin{claim}\label{clm:localdeficit_withtriangle}
    If $c(i)=3,$ then $\abs{N_{i-1}}+\abs{N_i}+\abs{N_{i+1}} \ge \ceilfrac{3\delta}{2}.$
\end{claim}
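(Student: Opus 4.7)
My plan is to pick a representative of each colour from $N_i$ and run a short double-counting argument. Fix a proper $3$-colouring of $G$; the assumption $c(i)=3$ allows me to choose $u_1,u_2,u_3\in N_i$ of three pairwise distinct colours. Two standard facts are at my disposal: first, every neighbour of a vertex of $N_i$ lies in $N_{i-1}\cup N_i\cup N_{i+1}$, because $N_0,N_1,\dots$ is a BFS-layering; second, every neighbour of $u_k$ has a colour different from that of $u_k$.

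I would then bound $\deg(u_1)+\deg(u_2)+\deg(u_3)$ from above by counting, for each $v\in N_{i-1}\cup N_i\cup N_{i+1}$, how many of the $u_k$ it is adjacent to. The colour of $v$ agrees with that of exactly one $u_k$, so $v$ is adjacent to at most the other two of the $u_k$; the same bound holds trivially when $v\in\{u_1,u_2,u_3\}$ itself. Combining this with the minimum-degree hypothesis gives
\[
3\delta \;\le\; \deg(u_1)+\deg(u_2)+\deg(u_3) \;\le\; 2\bigl(\abs{N_{i-1}}+\abs{N_i}+\abs{N_{i+1}}\bigr),
\]
and dividing by $2$ and taking ceilings yields $\abs{N_{i-1}}+\abs{N_i}+\abs{N_{i+1}}\ge \ceilfrac{3\delta}{2}$, as required.

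For the parenthetical remark that the same bound holds in the $\omega=3$ setting whenever $N_i$ contains a triangle $u_1u_2u_3$, I would run exactly the same double counting, with the ``at most two of the $u_k$'' inequality now coming from $K_4$-freeness (a common neighbour of all three $u_k$ would complete a $K_4$) instead of from proper $3$-colouring. I do not foresee a genuine obstacle in this claim; it is really a one-line pigeonhole observation, and the only point of care, in both settings, is the trivial case where $v$ itself is one of $u_1,u_2,u_3$.
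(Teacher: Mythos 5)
Your proof is correct and essentially coincides with the paper's: the paper sums, over the three colours, the observation that a vertex of that colour in $N_i$ has at least $\delta$ neighbours, all lying in $N_{i-1}\cup N_i\cup N_{i+1}$ and all differently coloured, so each vertex of these three layers is counted at most twice, giving $2(\abs{N_{i-1}}+\abs{N_i}+\abs{N_{i+1}})\ge 3\delta$; your double count over the representatives $u_1,u_2,u_3$ is just an edge-counting reformulation of the same pigeonhole. Your treatment of the parenthetical $\omega\le 3$ case (a common neighbour of a triangle in $N_i$ would create a $K_4$) is also the intended argument.
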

\begin{claimproof}
    The sum of sizes of two colour classes among $N_{i-1}, N_i$ and $N_{i+1}$ is at least $\delta$.
Summing over the three combinations, leaves us with $2(\abs{N_{i-1}}+\abs{N_i}+\abs{N_{i+1}}) \ge 3\delta.$
\end{claimproof}

Analogous statements of~\cref{clm:localdeficit_withtriangle} hold for larger $\chi$ or $\omega$ as well.

We give a more precise upper bound than the earlier mentioned crude $2\delta$ bound on the order of the neighbourhoods within an optimal fundamental block.

\begin{claim}\label{clm:|Ni|<delta}
    For $\delta \in \{7,8\}$, we can assume that there is no neighbourhood for which $\abs{N_i}\ge \delta$ in the optimal fundamental block.
\end{claim}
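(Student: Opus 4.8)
The plan is to argue that, just as in~\cref{clm:less_than6} for the $\omega\le 3$ case, a layer that is too large can be replaced by a cheaper local configuration without decreasing any vertex's degree below $\delta$, without raising the chromatic number above $3$, and without shortening the diameter; hence such a layer never occurs in an \emph{optimal} (minimum-order) fundamental block. Concretely, suppose $\abs{N_i}\ge\delta$ for some interior layer $N_i$. First I would record that, since $G[N_i]$ is a $3$-colourable complete multipartite graph (a clump graph) whose three colour classes we may call $A,B,C$ with $\abs A\ge\abs B\ge\abs C\ge 0$, the degree of a vertex of $N_i$ is at most $\abs{N_i}-\abs A+\abs{N_{i-1}}+\abs{N_{i+1}}$, so the minimum degree condition forces $\abs{N_{i-1}}+\abs{N_{i+1}}\ge \delta-\abs{N_i}+\abs A$. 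Two regimes then arise depending on how small $\abs{N_{i-1}}+\abs{N_{i+1}}$ is.

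In the main regime I would assume $\abs{N_{i-1}}+\abs{N_{i+1}}\ge 4$ (the handful of cases with $\abs{N_{i-1}}+\abs{N_{i+1}}\le 3$ I would treat separately, as in~\cref{clm:less_than6}: there one needs $\abs{N_i}$ very large to keep $G[N_i]$ suitably structured, and one replaces the triple of sizes by something like $(2,\ \lceil 3\delta/4\rceil\text{-ish},\ 2)$, checking degrees by hand). Given $\abs{N_{i-1}}+\abs{N_{i+1}}\ge 4$, I would replace $N_i$ by a balanced complete tripartite graph on $\delta-1$ vertices (so $c(i)=3$), replace $G[N_{i-1}]$ and $G[N_{i+1}]$ by independent sets of size $\max\{\abs{N_{i-1}},2\}$ and $\max\{\abs{N_{i+1}},2\}$ respectively, and make $G[N_i,N_{i-1}]$ and $G[N_i,N_{i+1}]$ complete bipartite. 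A vertex of the new $N_i$ then has degree at least $(\delta-1)-\lceil(\delta-1)/3\rceil+4\ge\delta$ for $\delta\in\{7,8\}$ — this is the numerical check I would actually carry out — while vertices of $N_{i\pm1}$ only gain neighbours, and after deleting at most one edge from each of $G[N_{i-1}],G[N_{i+1}]$ one restores triangle-freeness/$3$-colourability of those layers; crucially $\abs{N_i}$ strictly decreased and no other layer grew, contradicting minimality of the fundamental block. One also has to note that the layer $N_{i+2}$ (resp.\ $N_{i-2}$) is unaffected since $G[N_{i+1},N_{i+2}]$ still has every vertex of $N_{i+1}$ with a neighbour in $N_{i+2}$, which one can assume as in the earlier claims.

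The main obstacle I anticipate is exactly the boundary bookkeeping: one must be certain that shrinking $N_i$ does not violate the degree condition for vertices of $N_{i-1}$ or $N_{i+1}$ (it does not, since those layers only pick up edges to $N_i$ and lose at most one internal edge), and that the colour classes can be re-chosen consistently so that no edge of $G[N_{i-1},N_i]$ or $G[N_i,N_{i+1}]$ is monochromatic — which is automatic because $G[N_{i-1}]$ and $G[N_{i+1}]$ are made independent and hence can each be given a single colour distinct from an appropriate class of $N_i$. Once the degree inequality $(\delta-1)-\lceil(\delta-1)/3\rceil+4\ge\delta$ is verified for $\delta=7$ and $\delta=8$ (it reads $6-2+4=8\ge 7$ and $7-3+4=8\ge 8$, both true), and the small cases $\abs{N_{i-1}}+\abs{N_{i+1}}\le 3$ are dispatched by the same substitution with a slightly larger replacement layer, the claim follows. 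Finally, the "consequently" part — that we may also assume $\abs{N_{i-1}}+\abs{N_{i+1}}\ge 4$ at every interior $i$ — is then immediate, since $\abs{N_{i-1}}+\abs{N_{i+1}}\le 3$ together with the minimum degree condition forces some neighbour layer to have size $\ge\delta$, which we have just excluded.
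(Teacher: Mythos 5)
Your proposal pattern-matches from \cref{clm:less_than6} (the $\omega\le 3$ case) but misses that for $\chi\le 3$ the colour classes impose a rigidity that invalidates the core step. You cannot simultaneously make $c(i)=3$ and make $G[N_i,N_{i-1}]$, $G[N_i,N_{i+1}]$ complete bipartite: in a $3$-coloured clump graph a vertex of $N_{i\pm1}$ with colour $a$ has no neighbours among the colour-$a$ vertices of $N_i$, and indeed if $c(i)=3$ then $N_{i\pm1}$ cannot be complete to $N_i$ at all without introducing a fourth colour. Consequently your degree bound $(\delta-1)-\lceil(\delta-1)/3\rceil+4\ge\delta$ overcounts: in the generic boundary case $\abs{N_{i-1}}=\abs{N_{i+1}}=2$ with $N_{i-1}$ coloured $a$ and $N_{i+1}$ coloured $b$, a vertex of the shrunk $N_i$ with colour $a$ has degree only $(\delta-1)-x_a+\abs{N_{i+1}}$, and requiring all three colour classes of $N_i$ to clear $\delta$ forces $x_a,x_b\le 1$, $x_c\le 3$, giving $\abs{N_i}\le 5<\delta-1$ — a contradiction. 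So your replacement simply does not satisfy the minimum degree condition in the cases that matter.

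Beyond that numerical issue, the argument is structurally incomplete. The paper's proof necessarily splits on the colour profile $c(i-2), c(i), c(i+2)$; in the subcase $c(i-2)=c(i+2)=3$ no local substitution in a single layer works, and one must instead delete the six layers $N_{i-3},\dots,N_{i+3}$ (except $N_i$), insert a single balanced $3$-coloured layer, and run a counting argument via \cref{clm:localdeficit_withtriangle} to show the order drops by more than the ratio times the lost diameter. Your proposal contains nothing of this kind — it is purely a local replacement in $N_{i-1},N_i,N_{i+1}$ — and so cannot handle that subcase. The other cases in the paper also rely on moving vertices from $N_i$ into $N_{i\pm1}$ (rather than deleting them), precisely so that the degrees of vertices in $N_{i\pm1}$ and $N_{i\pm2}$ are preserved; your deletion-only substitution would drop those degrees below $\delta$ unless $\abs{N_{i-1}}+\abs{N_{i+1}}\ge\delta$, which you do not arrange. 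Finally, the ``consequently'' clause you try to prove at the end belongs to \cref{clm:less_than6}, not to the present claim, so that paragraph is addressing a statement that is not being made here.
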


\begin{claimproof}
    This can be verified by case analysis. The details are explained in~\cref{sec:cases}, which extends the proof for $\delta=8$ we sketch here.

    Assume the claim is not true, and thus $\abs{N_i} \ge \delta$. 
    First observe that $\abs{N_i} \le \floorfrac{3 \delta}2$, since a balanced neighbourhood ($3$ colour classes with sizes that differ at most $1$) of size $\floorfrac{3 \delta}2$ can be fitted in anywhere.

    If $c(i-2), c(i), c(i+2)\le 2$, then one can assume that $c(i-1)=c(i+1)=1$.
    If $\abs{N_{i-1}}+\abs{N_{i+1}} \ge \delta,$ it is trivial that the size of $N_{i}$ can be decreased without destroying the property.
    In the other case, one can move vertices from $N_i$ to $N_{i\pm 1}$ till $\abs{N_i}=\delta -1$ and end with a fundamental block that is still fine.

    If $c(i-2), c(i+2)\le 2$ and $c(i)=3$, we can again put all vertices from $N_{i \pm 1}$ in a single colour class.
    By~\cref{clm:localdeficit_withtriangle}, $\abs{N_{i-1}}+\abs{N_i}+\abs{N_{i+1}} \ge \ceilfrac{3\delta}{2}.$
    Now one can take a balanced two-colouring of $N_i$ where $\abs{N_i}=\delta-1,$ possibly after moving some vertices to $N_{i \pm 1}.$
    
    If $c(i-2)=c(i+2)=3$, one can remove $N_{j}$ for $j \in [i-3,i+3]\setminus\{i\}$ and put a balanced $3$-coloured $N_i$ of size $10$ between $N_{i-4}$ and $N_{i+4}.$ 
    By~\cref{clm:localdeficit_withtriangle}, we removed at least $2\cdot 12+ 8$ vertices, replacing them by $10$, giving a decrease of at least $22$ vertices, while the diameter decreases by only $6$.
    Since $\frac{22}{6}>\frac 72,$ this is an improvement.

    Finally assume $c(i-2)=3$ and $c(i+2)\le 2.$ Using~\cref{clm:localdeficit_withtriangle} and considering a few cases, we can decrease the order by at least $8$ and the length by $2$.
\end{claimproof}

Knowing restrictions on the sizes of all colour classes, we can once again use the algorithm described in~\cref{sec:pc_search} to obtain $f'(7)=\frac{17}{52}$ and $f'(8)=\frac{2}{7}$. This concludes the proof for~\cref{prop:delta7} and~\cref{prop:delta8}.

An optimal fundamental block for respectively $\delta=7$ and $\delta=8$ is given below. Here every $t \in \{1,2,3\}$ works for $\delta=8.$

$$\begin{pmatrix} 
0& 3& 0& 1& 0& 3& 0& 3& 0& 1& 0& 3& 0& 1& 0& 3& 0\\
0& 3& 0& 0& 3& 0& 1& 0& 3& 0& 2& 0& 2& 0& 3& 0& 1\\
3& 0& 1& 0& 3& 0& 0& 1& 2& 0& 0& 3& 0& 0& 2& 1& 0
 \end{pmatrix} \quad \begin{pmatrix} 
t&0& 6-t& 0\\
0& 2& 0& 2\\
0& 2& 0& 2
\end{pmatrix} $$

The following assumption seems natural for the extremal graphs, but has not been proven.

\textbf{Assumption}
In the optimal fundamental block for $\chi=k \ge 3$ for some $\delta$, we may assume that $c(i) \le k-1$ for every $i$.

For $\delta=16$, the optimal period cannot be computed in reasonable time using the algorithm described in~\cref{sec:pc_search} if this assumption is not made. However, by using the previous assumption and additionally assuming that the period is bounded by 100 and that there is an $i$ for which $c(i)=1$, we obtain a counterexample~for $\delta=16$ to~\cref{conj:EPPT_wrong} (i) (in a regime outside of the regime of the counterexamples produced by Czabarka, Singgih, and Sz\'ekely~\cite{CSS21}).
From this, we may also expect that the region for which the conjecture holds is narrower than the narrowed window by the authors from~\cite{CSS21} (end of page 39). The fundamental block that yields the counterexample is presented below, resulting in the fraction $\frac{31}{216}$.
\[
\resizebox{\textwidth}{!}{$
\begin{pmatrix} 
0 & 0 & 7 & 0 & 1 & 0 & 7 & 0 & 0 & 7 & 0 & 0 & 2 & 5 & 0 & 0 & 7 & 0 & 0 & 4 & 3 & 0 & 0 & 7 & 0 & 0 & 8 & 0 & 1 & 0 & 7 \\
0 & 1 & 0 & 8 & 0 & 7 & 0 & 2 & 0 & 7 & 0 & 2 & 0 & 7 & 0 & 4 & 0 & 5 & 0 & 7 & 0 & 2 & 0 & 7 & 0 & 2 & 0 & 7 & 0 & 0 & 8 \\
2 & 0 & 7 & 0 & 1 & 0 & 7 & 0 & 2 & 0 & 7 & 0 & 7 & 0 & 2 & 0 & 7 & 0 & 2 & 0 & 7 & 0 & 6 & 0 & 3 & 0 & 5 & 2 & 0 & 7 & 0
\end{pmatrix}
$}
\]

For $9 \le \delta \le 15$, one could in principle obtain sharp bounds under similar assumptions, by modifying the computer code in~\cref{sec:pc_search}, but $\delta=16$ was the next interesting case after $\delta=8$ with respect to~\cref{conj:EPPT_wrong} (i) and so we decided not to pursue determining those values.

\section*{Acknowledgements}
\noindent The computational resources and services used in this work were provided by the VSC (Flemish Supercomputer Centre), funded by the Research Foundation Flanders (FWO) and the Flemish Government - Department EWI.

\section*{Appendix}\label{sec: appendix}

\appendix

\section{Details of some case analysis for~\cref{clm:|Ni|<delta}}\label{sec:cases}

\begin{claim}\label{clm:delta-1underassumption}
If in the optimal fundamental block $c(i-2), c(i+2)\le 2$, then $\abs{N_i}\le \delta-1$.
\end{claim}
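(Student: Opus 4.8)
The plan is to argue by a local surgery, exactly in the spirit of the surgeries used in Claim \ref{clm:|Ni|<delta}: assume for contradiction that $\abs{N_i} \ge \delta$ in an optimal fundamental block where $c(i-2), c(i+2) \le 2$, and modify the block so as to reduce the order without changing the diameter (or, if the diameter drops, with a good enough order/length trade-off to beat $\frac{7}{2\delta}$), contradicting optimality. Since $c(i-2) \le 2$, the vertices of $N_{i-1}$ can only be forced to miss colours appearing in $N_{i-2}$ and $N_i$; because only two colours appear in $N_{i-2}$, we may first recolour so that all of $N_{i-1}$ lies in a single colour class (the one absent from $N_{i-2}$), and symmetrically for $N_{i+1}$ using $c(i+2) \le 2$. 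This is the same normalisation step already used in the sketch of Claim \ref{clm:|Ni|<delta}, so I would simply invoke it.

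The main step splits according to $c(i)$. If $c(i) \le 2$, then we are in the situation ``$c(i-2),c(i),c(i+2) \le 2$'' handled in the sketch of Claim \ref{clm:|Ni|<delta}: with $N_{i-1}, N_{i+1}$ each monochromatic we may move vertices out of $N_i$ into $N_{i\pm 1}$ while keeping every vertex's degree at least $\delta$ (each vertex of $N_i$ sees all of $N_{i-1}\cup N_{i+1}$ plus its own colour-complement inside $N_i$), stopping once $\abs{N_i} = \delta - 1$; this strictly decreases the order with no change in diameter. If $c(i) = 3$, we invoke Claim \ref{clm:localdeficit_withtriangle} to get $\abs{N_{i-1}} + \abs{N_i} + \abs{N_{i+1}} \ge \ceilfrac{3\delta}{2}$, and then replace $G[N_i]$ by a balanced $3$-colouring of size $\delta - 1$ (or as close to balanced as $\delta-1$ permits), pushing the surplus vertices into $N_{i-1}$ and $N_{i+1}$; one checks that a balanced $3$-coloured set of size $\delta-1$ fits between the (now monochromatic) $N_{i-1}$ and $N_{i+1}$ while respecting $\omega \le 3$ (or $\chi \le 3$) and all degree constraints. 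Either way the order strictly drops, contradicting optimality, so $\abs{N_i} \le \delta - 1$.

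The bookkeeping to watch is the degree of the vertices in $N_{i-1}$ and $N_{i+1}$ themselves after the surgery: moving vertices into these layers only increases their degree toward $N_i$, but one must confirm their degree toward $N_{i-2}$ (resp.\ $N_{i+2}$) is not compromised — and it is not, since we never delete edges there, and a vertex in $N_{i-1}$ in the single allowed colour class is adjacent to all of $N_{i-2}$ anyway. One also needs that after moving vertices into $N_{i\pm1}$ we have not created a neighbourhood of size $\ge \delta$ there with three colours that would itself violate the ``$<\delta$'' conclusion of Claim \ref{clm:|Ni|<delta}; but Claim \ref{clm:|Ni|<delta} is exactly what is being built up here, so I would phrase this claim as the first case of that inductive/case-analytic argument and carry the invariant ``all layers have size $< \delta$, established from the outside in'' along with it — this is the one genuinely delicate point, and it is why the claim is stated separately and proved under the hypothesis $c(i\pm2) \le 2$ rather than in full generality. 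The remaining verifications (that $K_4$ / a fourth colour is never created, that the new $G[N_{i-1},N_i]$ and $G[N_i,N_{i+1}]$ can be taken complete bipartite) are routine and identical to those in Claims \ref{clm:4spansC4} and \ref{clm:5spansC5}.
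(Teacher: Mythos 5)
Your overall plan mirrors the paper's proof exactly: normalize $N_{i\pm1}$ to be monochromatic using $c(i\pm2)\le2$, split on $c(i)\le 2$ vs.\ $c(i)=3$, and in each case move or delete vertices until $\abs{N_i}=\delta-1$. Your treatment of the case $c(i)\le 2$ is the same as the paper's and your bookkeeping comments about the degrees in $N_{i\pm1}$ are sound.

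However, there is a genuine error in your case $c(i)=3$: you replace $N_i$ by a \emph{balanced $3$-colouring} of size $\delta-1$, whereas the paper uses a \emph{balanced two-colouring}. The distinction matters. If $N_{i-1}$ and $N_{i+1}$ have been made monochromatic, at least one of the three colour classes of your new $N_i$ coincides with the colour of $N_{i-1}$ or $N_{i+1}$; the vertices in that class are non-adjacent to the corresponding neighbouring layer and thus see only the other two classes inside $N_i$, roughly $\tfrac{2}{3}(\delta-1)<\delta$ vertices. (Concretely for $\delta=8$: a balanced $(3,2,2)$ split of $N_i$ with both $N_{i\pm1}$ in colour $3$ gives colour-$3$ vertices in $N_i$ degree $5<8$.) If instead $N_{i-1}$ and $N_{i+1}$ get different colours, one can force $\abs{N_{i-1}},\abs{N_{i+1}}\ge\ceilfrac{\delta+2}{3}$, but then the new total $\abs{N_{i-1}}+\abs{N_i}+\abs{N_{i+1}}$ can exceed $\ceilfrac{3\delta}{2}$, so the surgery need not beat the original block and the optimality contradiction fails. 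The correct move, as in the paper, is to take $N_i$ two-coloured in the two colours \emph{not} used by $N_{i\pm1}$: then every vertex of $N_i$ sees all of $N_{i-1}\cup N_{i+1}$ plus $\bfloor{(\delta-1)/2}$ vertices of the other colour class, which (together with \cref{clm:localdeficit_withtriangle}) lets one land on $\abs{N_i}=\delta-1$ without increasing the order. With that replacement your argument agrees with the paper's; as written, the $c(i)=3$ case does not go through.
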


\begin{claimproof}
    This can be verified by case analysis.

    First observe that $\abs{N_i} \le \floorfrac{3 \delta}2$, since a balanced neighbourhood of that size can fit anywhere. So assume $\abs{N_i}\ge \delta$.
    We consider two cases.

    \begin{itemize}
    \item If $c(i)\le 2$, then one can assume that $c(i-1)=c(i+1)=1$.
    Note that we can permute the colours of $N_i$ and $N_j$ for $j \ge 2$ such that the same colour is missing in $N_{i-2}, N_i$ and $N_{i+2}.$
    So we can put $\abs{N_{i\pm 1}}$ many vertices in the third colour at $N_{i\pm 1}$.
    Every vertex in $N_j$ for $\abs{j-i}\not= 1$ has degree at least $\delta$ as this was initially the case.
    Every vertex in $N_{i\pm 1}$ also has degree at least $\delta$ since $\abs{N_i}+\abs{N_{i\pm 2}}  \ge \delta.$

    If $\abs{N_{i-1}}+\abs{N_{i+1}} \ge \delta,$ it is trivial that the size of $N_{i}$ can be decreased (one can even choose $c(i)=1$ and $\abs{N_i}=\delta -1$) while all vertices keep having degree at least $\delta.$
    If $\abs{N_{i-1}}+\abs{N_{i+1}} <\delta,$ one can move vertices from $N_i$ to $N_{i\pm 1}$ till $\abs{N_i}=\delta -1$ and end with a construction for which the minimum degree is still at least $\delta$.

    \item If $c(i)=3$, we can again put all vertices from $N_{i \pm 1}$ in a single colour class.
    By~\cref{clm:localdeficit_withtriangle}, $\abs{N_{i-1}}+\abs{N_i}+\abs{N_{i+1}} \ge \ceilfrac{3\delta}{2}.$
    Now one can take a balanced two-colouring of $N_i$ where $\abs{N_i}=\delta-1,$ possibly after moving some vertices to $N_{i \pm 1}.$ \qedhere
    \end{itemize}
    \end{claimproof}

    \begin{claim}
        If $\delta \in \{7,8\},$ then every neighbourhood in an optimal fundamental block satisfies $\abs{N_i}\le \delta-1.$
    \end{claim}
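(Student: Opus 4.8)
The plan is to reduce the general statement about $\delta\in\{7,8\}$ to~\cref{clm:delta-1underassumption}, which already handles the case $c(i-2),c(i+2)\le 2$. So the remaining work is to dispose of the cases where at least one of the two neighbourhoods at distance $2$ from $N_i$ has all three colours (or, in the $\omega=3$ setting, contains a triangle). By symmetry we may assume $c(i-2)=3$, and we split into the two subcases $c(i+2)=3$ and $c(i+2)\le 2$, exactly as sketched in the proof of~\cref{clm:|Ni|<delta}.

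First I would treat the subcase $c(i-2)=c(i+2)=3$. Here I delete the seven layers $N_{i-3},\dots,N_{i+3}$ except keep a single fresh layer $N_i$ in the middle, re-coloured as a balanced $3$-partite independent set of size $10$, and join it completely to $N_{i-4}$ and $N_{i+4}$. The point is to bound the number of deleted vertices from below: by~\cref{clm:localdeficit_withtriangle} applied at $i-2$ and at $i+2$ we lose at least $\ceilfrac{3\delta}{2}$ vertices from each of those two triples, and $N_i$ itself has $\abs{N_i}\ge\delta$, so altogether we remove at least $2\ceilfrac{3\delta}{2}+\delta$ vertices; for $\delta\ge 7$ this is at least $2\cdot 11+7=29$ (and for $\delta=8$ at least $2\cdot 12+8=32$). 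We add back only $10$, so the order drops by at least $19$ while the diameter drops by exactly $6$; since $\tfrac{19}{6}>\tfrac{17}{52}^{-1}\cdot\!\!$ — more to the point, since $\tfrac{19}{6}>\tfrac{7}{2}\ge 1/f'(\delta)$ for the relevant $\delta$ — the fundamental block we started from was not optimal, contradiction. One must check that the new $N_i$ of size $10$ genuinely fits, i.e. that degrees of the re-coloured middle layer and of its neighbours are still $\ge\delta$; a balanced $3$-colouring of a size-$10$ layer gives every vertex $\ge 6\ge\delta-2$ neighbours inside the layer plus all of $N_{i-4}\cup N_{i+4}$, which is harmless since we may also enlarge those adjacent layers to size $\ge 2$ if needed (as in~\cref{clm:5spansC5}-style arguments).

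Second, the subcase $c(i-2)=3$, $c(i+2)\le 2$. Now I would keep $N_{i+1},N_{i+2},\dots$ intact on the right and only surgically modify the left side together with $N_i$. By~\cref{clm:localdeficit_withtriangle} at $i-2$ we have $\abs{N_{i-3}}+\abs{N_{i-2}}+\abs{N_{i-1}}\ge\ceilfrac{3\delta}{2}$, and $\abs{N_i}\ge\delta$. The idea is to delete $N_{i-3},N_{i-2}$, shrink $N_i$ to a balanced two- or three-coloured layer of size $\delta-1$ (choosing the missing colour to match whatever is missing from $N_{i+2}$ when $c(i+2)\le 2$, exactly as in the first bullet of~\cref{clm:delta-1underassumption}), and glue $N_{i-1}$ (enlarged to size $\ge 2$ and made an independent set if necessary) directly to $N_{i-4}$; this reduces the diameter by $2$ and, by the displayed inequalities, the order by at least $(\ceilfrac{3\delta}{2}-\abs{N_{i-1}})+\abs{N_{i-1}}+(\abs{N_i}-(\delta-1))\ge\ceilfrac{3\delta}{2}+1\ge 12$ for $\delta=7$ (and $\ge 13$ for $\delta=8$) — at any rate at least $8$, and $\tfrac{8}{2}=4>\tfrac72$, again contradicting optimality. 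The bookkeeping to watch is that removing $N_{i-3}$ could drop the degree of vertices in $N_{i-4}$; this is precisely why one re-attaches $N_{i-1}$ to $N_{i-4}$ completely and, if that is still not enough, absorbs the deficit by a local adjustment of $N_{i-1}$'s size (keeping it an independent set, so no $K_4$ and no fourth colour).

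The main obstacle is not the arithmetic — the ratios $\tfrac{19}{6},\,\tfrac{8}{2}>\tfrac72$ are comfortably in our favour — but the feasibility bookkeeping: after each deletion one must certify that \emph{every} vertex in \emph{every} surviving layer still has degree $\ge\delta$, that the clique number stays $\le 3$ (resp. the chromatic number $\le 3$), and that the new block is still a legitimate fundamental block that can be concatenated. The cleanest way to organise this is to always restore lost degree by making the layer adjacent to a deletion complete-bipartite to its new neighbour and, if necessary, inflating it to size exactly $\max\{\abs{\cdot},2\}$ as an independent set — the same device already used in~\cref{clm:4spansC4}, \cref{clm:5spansC5} and \cref{clm:less_than6} — so that no new cliques or colours are introduced. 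Once all four cases ($c(i-2),c(i+2)\le 2$ from~\cref{clm:delta-1underassumption}, and the three combinations involving a triangle) yield either a contradiction with optimality or a direct reduction to $\abs{N_i}\le\delta-1$, the claim follows for $\delta\in\{7,8\}$.
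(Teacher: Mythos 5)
Your first subcase, $c(i-2)=c(i+2)=3$, is essentially the paper's argument (the paper uses a middle layer of size $9$ when $\delta=7$ and $10$ when $\delta=8$, while you use $10$ for both; either works since the savings are ample), and the reduction via~\cref{clm:delta-1underassumption} is exactly what the paper does. The genuine gap is in the mixed subcase $c(i-2)=3$, $c(i+2)\le 2$, where your argument does not go through as written.

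Two concrete problems. First, the savings estimate is wrong: you write the order decrease as $(\lceil 3\delta/2\rceil-\abs{N_{i-1}})+\abs{N_{i-1}}+(\abs{N_i}-(\delta-1))$, but $N_{i-1}$ is \emph{kept} (indeed possibly enlarged), so the middle $+\abs{N_{i-1}}$ term is spurious; the honest estimate is only $\abs{N_{i-3}}+\abs{N_{i-2}}+(\abs{N_i}-(\delta-1))$ \emph{minus} whatever you add to $N_{i-1}$ to repair degrees. Since $\abs{N_{i-3}}$ could be the dominant contributor to the bound $\abs{N_{i-3}}+\abs{N_{i-2}}+\abs{N_{i-1}}\ge\lceil 3\delta/2\rceil$, and removing $N_{i-3}$ forces you to inflate $N_{i-1}$ to compensate the degrees of vertices in $N_{i-4}$, the net savings can fall well below the thresholds $\lceil 104/17\rceil=7$ (for $\delta=7$) and $8$ (for $\delta=8$) that you need for a diameter decrease of $2$; e.g.\ a guaranteed lower bound of only $\abs{N_{i-2}}+\abs{N_{i-1}}+1\ge 6$ is possible. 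Second, you also do not verify that after gluing $N_{i-1}$ to $N_{i-4}$, the vertices of $N_{i-1}$ themselves (now seeing only $N_{i-4}$ and the shrunken $N_i$ of size $\delta-1$) still have degree $\ge\delta$; this is not automatic. The paper avoids both pitfalls by giving explicit replacement patterns for the six layers $N_{i-4},\dots,N_{i+1}$ that depend on $c(i-4)$ (Figures~\ref{fig:localupdates7} and~\ref{fig:localupdates8}), and for $\delta=8$ it needs an additional delicate argument: one of the modifications only reduces the number of offending layers rather than strictly improving the ratio, and the final boundary configuration ($\abs{N_{i-4}}=\abs{N_{i+1}}=1$, $\abs{N_{i-3}}+\abs{N_{i-2}}+\abs{N_{i-1}}=12$, $\abs{N_i}=8$) is ruled out by a separate counting contradiction on colour-class sizes, which has no analogue in your sketch.
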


    \begin{claimproof}
        By~\cref{clm:delta-1underassumption}, we need to focus on two cases, which we do for $\delta=7$ and $\delta=8$ separately.

        \textbf{Case $\delta=7$}

        \begin{itemize}
            \item If $c(i-2)=c(i+2)=3$, one can remove $N_{j}$ for $j \in [i-3,i+3]\setminus\{i\}$ and put a balanced $3$-coloured $N_i$ of size $9$ between $N_{i-4}$ and $N_{i+4}.$ 
    By~\cref{clm:localdeficit_withtriangle}, we removed at least $2\cdot 11+ 7$ vertices, replacing them by $9$, giving a decrease of at least $20$ vertices, while the diameter decreases by only $6$.
    Since $\frac{20}{6}>3,$ this is an improvement.

    \item We assume $c(i-2)=3$ and $c(i+2)\le 2$ (the reverse is analogous).
    As before, we can assume $c(i+1)=1.$
    
    Using~\cref{clm:localdeficit_withtriangle}, we know that $\abs{N_{i-3}}+\abs{N_{i-2}}+\abs{N_{i-1}}\ge 11.$
    We also have $\abs{N_i} \ge 7.$
    
    Depending on $c(i-4)$ being $1,2$ or $3$, we can perform different substitutions that imply an improvement of ratio given by the period divided by $3$.
    If $c(i-4)=3,$ we can put consecutively $N_{i-4}, N_i, N_{i+1}$ to be at least $[1,1,1],[3,3,1],[0,0,1].$
    If $c(i-4)=2,$ we can do the same with $[0,1,1],[3,3,2],[0,0,1].$

    If $c(i-4)=1,$ we can put $[0,0,1],[3,3,0],[0,0,3],[3,3,0],[0,0,1]$ for $N_{i-4}$ up to $N_{i+1}$, decreasing the order with at least $3$ while the diameter decreases by $1$ and the number of neighbourhoods with $\abs{N_i}\ge \delta$ decreased by at least one.

    This is presented in~\cref{fig:localupdates7}. 
    Here we present the matrix $A$, where every column represents the number of vertices in each colour class for a neighbourhood.

    \begin{figure}[h]
 \centering
 $\begin{pmatrix} 
0 & 1 & \textbf{3} & 0&0\\
1 & 1&\textbf{3}&0&1 \\
1 & 1 & \textbf{1} & 1&0
\end{pmatrix}$
\quad
 $\begin{pmatrix} 
1 & 0 & \textbf{3} & 0&0\\
0 & 1&\textbf{3}&0&1 \\
0 & 1 & \textbf{2} & 1&0
\end{pmatrix}$
\quad
 $\begin{pmatrix} 
 0 &\textbf{3} & 0&\textbf{3}&0\\
 0&\textbf{3}&0&\textbf{3}&0 \\
 1 & 0 & \textbf{3}&0&1
\end{pmatrix}$
\caption{Examples of local improvements where $\delta=7$ and the diameter decreases}
 \label{fig:localupdates7}
\end{figure}
        \end{itemize}

          \textbf{Case $\delta=8$}

        \begin{itemize}
            \item If $c(i-2)=c(i+2)=3$, one can remove $N_{j}$ for $j \in [i-3,i+3]\setminus\{i\}$ and put a balanced $3$-coloured $N_i$ of size $10$ between $N_{i-4}$ and $N_{i+4}.$ 
    By~\cref{clm:localdeficit_withtriangle}, we removed at least $2\cdot 12+ 8$ vertices, replacing them by $10$, giving a decrease of at least $22$ vertices, while the diameter decreases by only $6$.
    Since $\frac{22}{6}>\frac 72,$ this is an improvement.

    \item Finally assume $c(i-2)=3$ and $c(i+2)\le 2.$ Using~\cref{clm:localdeficit_withtriangle}, we have $\abs{N_{i-3}}+\abs{N_{i-2}}+\abs{N_{i-1}}+\abs{N_i}\ge 20.$

    For $c(i-4)\in \{3,2,1\}$ resp., we can make local modifications, replacing $N_{i-3..i}$ with a single neighbourhood. These are presented in~\cref{fig:localupdates8}.
    Up to permuting, the $0$s and $1$s (or non-bold $2$) are lower bounds for the corresponding number of vertices.
    Here we use~\cite[Thm.~7(iii)]{CSS21ejc}), which says that if $c(i)=3$, then $c(i \pm 1) \ge 2$.

\begin{figure}[h]
 \centering
 $\begin{pmatrix} 
0 & 1 & \textbf{4} & 0&0\\
1 & 1&\textbf{3}&0&1 \\
1 & 1 & \textbf{2} & 1&0
\end{pmatrix}$
\quad
 $\begin{pmatrix} 
1 & 0 & \textbf{4} & 0&0\\
0 & 1&\textbf{3}&0&1 \\
0 & 1 & \textbf{2} & 1&0
\end{pmatrix}$
\quad
$\begin{pmatrix} 
0 & 0 & \textbf{4} & 0&0\\
0 & 1&\textbf{3}&0&1 \\
1 & 0 & \textbf{3} & 1&0
\end{pmatrix}$
\quad
$\begin{pmatrix} 
0 & 0 & \textbf{4} & 0&0\\
1 & 0&\textbf{3}&0&1 \\
0 &  2 & \textbf{2} &  1&0
\end{pmatrix}$
\caption{Examples of modifications where the diameter decreases by $3$ when $\delta=8$}
 \label{fig:localupdates8}
\end{figure}

Finally, we prove that the latter is impossible.
If $\abs{N_{i-5}}\le 2$, we need that at least $6$ vertices of $N_{i-3}$ are coloured by $2$ colours.
But since $c(i-2)\ge 3$ and $c(i-1)\ge 2$,
not every colour can appear $4$ times in $N_{i-3..i-1}.$

If $\abs{N_{i-5}}\ge 3$, we can end by a final modification, which results in a decrease of the order of $1$ and results in $\abs{N_i} \le 7.$

If the diameter decreases by $3$ and the number of vertices by at least $11,$ we know that the construction was not optimal.
So the only remaining case is when $c(i-4)=1$, $\abs{N_{i-4}}=\abs{N_{i+1}}=1$, $\abs{N_{i-3}}+\abs{N_{i-2}}+\abs{N_{i-1}}=12$ and $\abs{N_i}=8.$
We will show that this is impossible.

Let $x_j, y_j, z_j$ be the number of vertices in $N_j$ coloured with the first, second and third colour respectively.
Without loss of generality, we have $z_{i-4}=1$ and consequently $x_{i-3}, y_{i-3} \ge 1$ and $z_{i-3}=0$ (since $c(i-3)\ge 2$ and the assumptions that the colours of adjacent neighbourhoods are as disjoint as possible). 
One can represent this with the following part of the matrix

 \begin{center}
   $\begin{pmatrix} 
 0 &x_{i-3}&x_{i-2}&x_{i-1}& x_i \\
 0&y_{i-3}&y_{i-2}&y_{i-1}&y_i \\
 1&0&z_{i-2}&z_{i-1} &z_i 
\end{pmatrix}$ 
 \end{center}

Due to the minimum degree condition for the vertices in $N_{i-3}$ coloured by the first colour, we have that $y_{i-3}+y_{i-2}+z_{i-2}\ge 7$ and analogously 
$x_{i-3}+x_{i-2}+z_{i-2}\ge 7$.
Due to the minimum degree condition for the vertices in $N_{i-2},$ we have that $x_{i-3}+x_{i-2}+x_{i-1}= y_{i-3}+y_{i-2}+y_{i-1}=z_{i-2}+z_{i-1}=4.$
From combining these, $y_{i-3}+y_{i-2}\ge 7-4=3$ and
$x_{i-1}+y_{i-1}+z_{i-1}\le 12-7-3=2.$
But the latter implies that every vertex in $N_{i}$ has at least $8-2-1=5$ neighbours within $N_{i}$, while $c(i) \le 2$ ($c(i)=3$ leads to a contradiction with $c(i+1)=1$) and $\abs{N_i}=8$, as desired. \qedhere
\end{itemize}
\end{claimproof}

\section{Details about computer search}\label{sec:pc_search}

Given integers $\delta$ and $C$, we describe an algorithm that can be used to determine $f(\delta)$, assuming that $f(\delta)$ is determined by a repeatable graph $G$ (with respect to $\delta$ and $\omega=3$) whose repetition length is at most $C$. More precisely, $f(\delta)$ is then equal to the ratio of the repetition length of $G$ and the order of the graph induced by all layers of $G$ except for the first and last layer. Later, we then explain how $f'(\delta)$ can be computed by slightly modifying this algorithm.

The idea is that the algorithm builds repeatable graphs by adding layers $N_i$ of a graph one by one. Recall from before that we may assume without loss of generality that the number of vertices in each layer $N_i$ of a repeatable graph has an upper bound (for example $2\delta$ is such a valid upper bound that works in general, but better upper bounds are possible). The algorithm maintains the invariant that each vertex, except for vertices in the first and last layer, must have degree at least $\delta$ and the entire graph must have clique number $\omega \leq 3$. When adding edges between layers $N_{i-1}$ and $N_i$, it suffices to only consider adding edge sets $E \subseteq N_{i-1} \times N_i$ such that it is impossible to add another edge $e \in N_{i-1} \times N_i$ (where $e \notin E$) without resulting in a graph with clique number $\omega>3$, because more edges lead to larger vertex degrees and have no further influence on the clique number when adding additional layers. We will refer to such an edge set $E$ as a \textit{maximal edge set}. Moreover, the algorithm does not need to consider all combinations of layers $N_0, N_1,...,N_i$ and maximal edge sets between them. More precisely, given a graph $G$, which is induced by the consecutive layers $N_0, N_1, \ldots, N_i$. When adding further layers to $G$ in order to arrive at an optimal repeatable graph, the only parameters which are relevant consist of what the graphs $G[N_0], G[N_1], G[N_0 \cup N_1]$ and $G[N_i]$ are, together with the information of the number of layers, which degree each vertex in $N_i$ has (in the graph $G[N_{i-1} \cup N_i]$) and how many vertices $G$ has (less is better with respect to optimal repeatable graphs). This naturally lends itself to a dynamic programming approach, where one calculates the minimum order of a graph induced by consecutive layers for each combination of feasible parameters from the previous sentence. In case a repeatable graph is found, the algorithm updates the best ratio between repetition length and order of the graph induced by all layers except the first and the last one, and finally the algorithm returns the optimal such ratio $f(\delta)$. The pseudo code of the algorithm can be found in~\cref{algo:calculate_cDelta} (the main function) and~\cref{algo:recursivelyAddLayers} (the function that recursively adds layers).

The value $f'(\delta)$ can in fact be computed using an algorithm very similar to the original one. However, this version can be significantly sped up. More precisely, instead of considering which graph is induced by layer $N_i$, it suffices to know how many vertices of each colour class are present for the $\chi$-version. Given the number of vertices in each colour class in each layer, the edges are also automatically determined: we add an edge between each vertex $v \in N_i$ and each vertex $u \in N_{i-1} \cup N_{i} \cup N_{i+1}$ such that $u$ and $v$ belong to different colour classes (this does not affect the chromatic number, while making the degrees as large as possible). In other words, we only need to consider clump graphs (as defined in Subsec.~\ref{sec:notation}). This makes it possible to calculate $f'(\delta)$ for larger values than $f(\delta)$ can be computed.

Finally, we stress that the algorithms are also adapted to incorporate the Claims made in the main part of the paper (all algorithmic ideas remain the same, but the graphs that one needs to consider in each layer can be further restricted thanks to these claims). The algorithms were also parallelised to make the computations feasible. The total time of all computations performed in this paper amounts to approximately 1 CPU-year. We make all code publicly available at \url{https://github.com/JorikJooken/diameterDegreeClique}.

\begin{algorithm}[ht!]
\caption{\texttt{Calculate\_}$f(\delta)$(Integer $\delta$, Integer $C$)}
\label{algo:calculate_cDelta}
  \begin{algorithmic}[1]
            \STATE Let $N$ be an upper bound for the number of vertices in each layer $N_i$
            \STATE Let $\mathcal{L}$ be a list of pairwise non-isomorphic graphs with order at most $N$
            \STATE $f(\delta) \gets -\infty$
            \FOR{$G_1 \in \mathcal{L}$}
                \FOR{$G_2 \in \mathcal{L}$}
                    \FOR{Each maximal edge set $E$ between $G_1$ and $G_2$}
                        \STATE Let $G'$ be the graph obtained by adding each edge in $E$ to the disjoint union of $G_1$ and $G_2$
                        \IF{$G'$ has clique number $\omega \leq 3$}
                            \STATE $\texttt{parameters.}G[N_0] \gets G_1$
                            \STATE $\texttt{parameters.}G[N_1] \gets G_2$
                            \STATE $\texttt{parameters.}G[N_0 \cup N_1] \gets G'$
                            \STATE $\texttt{parameters.}G[N_{\texttt{lastLayer}}] \gets G_2$
                            \STATE $\texttt{parameters.numberLayers} \gets 2$
                            \STATE $\texttt{parameters.degreesLastLayer} \gets \{(u,\deg_{G'}(u))~|~u \in V(G_2)\}$
                            \STATE $\texttt{currentOrder} \gets |V(G')|$
                            \STATE $\texttt{bestRatio} \gets -\infty$ // A global variable that can be updated by the function $\texttt{recursivelyAddLayers}$
                            \STATE $\texttt{recursivelyAddLayers}(\texttt{parameters},\delta,C,\texttt{currentOrder})$

                            \STATE $f(\delta) \gets \max(f(\delta),\texttt{bestRatio})$
                        \ENDIF
                    \ENDFOR
                \ENDFOR
            \ENDFOR
            \RETURN $f(\delta)$
  \end{algorithmic}
\end{algorithm}

\begin{algorithm}[ht!]
\caption{\texttt{recursivelyAddLayers}(Parameters $p$, Integer $\delta$, Integer $C$, Integer \texttt{currentOrder})}
\label{algo:recursivelyAddLayers}
  \begin{algorithmic}[1]
            \IF{$p.\texttt{numberLayers}-1 \leq C$}
                \IF{The dynamic programming table $T$ does not contain any graph with the same parameters as $p$ and fewer vertices as \texttt{currentOrder}}
                    \STATE // Add one layer such that the new last layer is given by $G_{\texttt{last}}$
                    \FOR{$G_{\texttt{last}} \in \mathcal{L}$}
                        \FOR{Each maximal edge set $E$ between $p\texttt{.}G[N_{\texttt{lastLayer}}]$ and $G_{\texttt{last}}$}
                            \STATE Let $G'$ be the graph obtained by adding each edge in $E$ to the disjoint union of $p\texttt{.}G[N_{\texttt{lastLayer}}]$ and $G_{\texttt{last}}$
                            \IF{The degree of every vertex in $p\texttt{.}G[N_{\texttt{lastLayer}}]$ is at least $\delta$ after adding the edges from $E$ AND $G'$ has clique number $\omega \leq 3$}
                                \STATE $\texttt{newOrder} \gets \texttt{currentOrder}+|V(G_{\texttt{last}})|$
                                \STATE $\texttt{newP} \gets \texttt{updateParameters}(p,G_{\texttt{last}},E)$
                                \STATE $\texttt{updateDynamicProgrammingTable}(T,\texttt{newOrder},\texttt{newP})$
                                \STATE // Update \texttt{bestRatio}
                                \IF{The new graph is repeatable}
                                    \STATE $\texttt{bestRatio} \gets \max\left(\texttt{bestRatio},\frac{\texttt{newP}.\texttt{numberLayers}-2}{\texttt{newOrder}-|V(\texttt{newP}.G[N_0])|-|V(\texttt{newP}.G[N_{\texttt{lastLayer}}])|}\right)$
                                \ENDIF
                                \STATE $\texttt{recursivelyAddLayers}(\texttt{newP},\delta,C,\texttt{newOrder})$
                            \ENDIF
                        \ENDFOR
                    \ENDFOR
                \ENDIF
            \ENDIF
  \end{algorithmic}
\end{algorithm}


\end{document}